\theoremstyle{plain}
\newtheorem {lemma}{Lemma}
\newtheorem {proposition}[lemma]{Proposition}
\newtheorem {theorem}[lemma]{Theorem}
\newtheorem {corollary}[lemma]{Corollary}
\newtheorem {Key Lemma}[lemma]{Key Lemma}
\theoremstyle{definition}
\newtheorem{definition}[lemma]{Definition}
\newtheorem{remark}[lemma]{Remark}
\newtheorem {question}[lemma]{Question}
\newcommand{\Oo}{\mathcal{O}}
\newcommand{\sr}{\operatorname{sr}}
\newcommand{\m}{\mathfrak{m}}
\newcommand{\Rad}{\operatorname{Rad}}
\newcommand{\Max}{\operatorname{Max}}
\newcommand{\N}{\mathbb{N}}
\newcommand{\Z}{\mathbb{Z}}
\newcommand{\GL}{\operatorname{GL}}
\newcommand{\SL}{\operatorname{SL}}
\newcommand{\E}{\operatorname{E}}
\newcommand{\C}{\operatorname{C}}
\newcommand{\Perm}{\operatorname{P}}
\newcommand{\Cl}{\mathfrak{C}}
\newcommand{\U}{\operatorname{U}}
\newcommand{\Ort}{\operatorname{O}}
\newcommand{\Mat}{\operatorname{M}}
\newcommand{\F}{\mathbb{F}}
\title{Reverse decomposition of unipotents over noncommutative rings I: General linear groups}
\author{Raimund Preusser}
\date{}
\begin{document}
\maketitle
\begin{abstract}
\noindent
Recently is has been proved that if $\sigma\in \GL_n(R)$ where $R$ is an commutative ring and $n\geq 3$, then each of the matrices $t_{kl}(\sigma_{ij})~(i\neq j,k\neq l)$ is a product of eight $\E_n(R)$-conjugates of $\sigma$ and $\sigma^{-1}$. In this article we show that similar results hold true if $R$ is a (noncommutative) von Neumann regular ring, or a Banach algebra, or a ring satisfying a stable range condition, or a ring with Euclidean algorithm, or an almost commutative ring.
\end{abstract}
\let\thefootnote\relax\footnotetext{{\it 2010 Mathematics Subject Classification.} 15A24, 20G35.}
\let\thefootnote\relax\footnotetext{{\it Keywords and phrases.} general linear groups, matrix identities, noncommutative rings.}

\section{Introduction}
In 1964, H.\ Bass \cite{bass} showed that if $R$ is a ring and $H$ a subgroup of the general linear group $\GL_n(R)$, then 
\begin{equation*}
H\text{ is normalised by }\E_n(R)~\Leftrightarrow~ \exists \text{ ideal }I:~\E_n(R,I)\subseteq H\subseteq \C_n(R,I)
\end{equation*}
provided $n$ is large enough with respect to the stable rank of $R$. Here $\E_n(R)$ denotes the elementary subgroup, $\E_n(R,I)$ the relative elementary subgroup of level $I$ and $\C_n(R,I)$ the full congruence subgroup of level $I$ (cf. \cite{preusser2}). Bass's result, which is one of the central points in the structure theory of general linear groups, is known as {\it Sandwich Classification Theorem}. In the 1970's and 80's, the validity of this theorem was extended by J.\ Wilson \cite{wilson}, I.\ Golubchik \cite{golubchik}, L.\ Vaserstein \cite{vaserstein, vaserstein_ban, vaserstein_neum} and others. It holds true, for example, if $R$ is almost commutative (i.e. finitely generated as a module over its center) and $n\geq 3$.

It follows from the Sandwich Classification Theorem that if $R$ is a commutative ring, $n\geq 3$, $\sigma\in \GL_n(R)$, $i\neq j$ and $k\neq l$, then the elementary transvection $t_{kl}(\sigma_{ij})$ can be expressed as a finite product of $\E_n(R)$-conjugates of $\sigma$ and $\sigma^{-1}$. 
In 1960, J.\ Brenner \cite{brenner} showed that if $R=\Z$, then there is a bound for the number of factors needed for such an expression of $t_{kl}(\sigma_{ij})$, but until recently there was not much hope to prove such a result for arbitrary commutative rings $R$. 

However, in 2018 the author found an explicit expression of $t_{kl}(\sigma_{ij})$ (where $\sigma\in \GL_n(R)$, $R$ is a commutative ring, $n\geq 3$, $i\neq j$, $k\neq l$) as a product of $8$ elementary conjugates of $\sigma$ and $\sigma^{-1}$, yielding a very short proof of the Sandwich Classification Theorem for commutative rings \cite{preusser2}. Similar results were obtained for the even- and odd-dimensional orthogonal groups $\Ort_{2n}(R)$ and $\Ort_{2n+1}(R)$, and the even- and odd-dimensional unitary groups $\U_{2n}(R,\Lambda)$ and $\U_{2n+1}(R,\Delta)$ where $R$ is a commutative ring and $n\geq 3$ \cite{preusser2, preusser3}.

In order to show the normality of the elementary subgroup $\E_n(R)$ in $\GL_n(R)$, one has to prove that $\E_n(R)^\sigma\subseteq \E_n(R)$ for any $\sigma\in\GL_n(R)$. {\it Decomposition of unipotents} provides explicit formulae expressing the generators $t_{kl}(x)^\sigma~(k\neq l, x\in R)$ of $\E_n(R)^{\sigma}$ as products of elementary transvections, see \cite{stepanov-vavilov}. In order to show the Sandwich Classification Theorem for commutative rings, one has to prove that $\E_n(I)\subseteq \sigma^{E_n(R)}$ for any $\sigma\in\GL_n(R)$ where $I$ denotes the ideal generated by the nondiagonal entries of $\sigma$ and all differences of diagonal entries. The paper \cite{preusser2} provides explicit formulae expressing the generators $t_{kl}(x\sigma_{ij}),t_{kl}(x(\sigma_{ii}-\sigma_{jj}))~(k\neq l,i\neq j,x\in R)$ of $\E_n(I)$, as products of $\E_n(R)$-conjugates of $\sigma$ and $\sigma^{-1}$. N.\ Vavilov considered the papers \cite{preusser2, preusser3} ``the first major advance in the direction of what can be dubbed the {\it reverse decomposition of unipotents}'', see \cite{vavilov}. 

In this paper we obtain bounds for the reverse decomposition of unipotents over different classes of noncommutative rings. 
Pick a $\sigma\in \GL_n(R)$. Our goal is to express the matrix $t_{kl}(\sigma_{ij})$ where $k\neq l$ and $i\neq j$ as a product of $E_n(R)$-conjugates of $\sigma$ and $\sigma^{-1}$. An idea is to start with $\sigma$ and then apply successively operations of types (a) and (b) below until one arrives at $t_{kl}(\sigma_{ij})$.
\begin{enumerate}[(a)]
\item $\GL_n(R)\rightarrow \GL_n(R),\tau\mapsto \tau^\xi$ where $\xi\in \E_n(R)$.
\item $\GL_n(R)\rightarrow \GL_n(R),\tau\mapsto [\tau,\xi]$ where $\xi\in \E_n(R)$.
\end{enumerate} 
If $\tau$ lies in $\sigma^{\E_n(R)}$ and $\rho$ is obtained from $\tau$ by applying an operation of type (a) or (b), then clearly $\rho$ again lies in $\sigma^{\E_n(R)}$.

In this paper we also use operations of type (c) below. 
\begin{enumerate}[(c)]
\setcounter{enumi}{3}
\item $\GL_n(R)\times\GL_n(R)\rightarrow \GL_n(R)\times\GL_n(R),(\tau_1,\tau_2)\mapsto ([\tau_1^{-1},\xi],[\xi,\tau_2])$ where $\xi\in \E_n(R)$.
\end{enumerate} 
We show in Section 4 that if the product $\tau_1\tau_2$ lies in $\sigma^{\E_n(R)}$ where $\tau_1\in\E_n(R)$, and $(\rho_1, \rho_2)$ is obtained from $(\tau_1,\tau_2)$ by applying an operation of type (c), then the product $\rho_1\rho_2$ again lies in $\sigma^{\E_n(R)}$.

The rest of the paper is organised as follows. In Section 2 we recall some standard notation which is used throughout the paper. In Section 3 we recall the definitions of the general linear group $\GL_n(R)$ and its elementary subgroup $\E_n(R)$. In Section 4, we introduce the operations $(c)$ above in the abstract context of groups. Then we use these operations to obtain results for general linear groups which are crucial for the following sections. In Section 5 we show how \cite[Theorem 12]{preusser2} can be derived from the results of Section 4. In Sections 6-10 we get bounds for the reverse decomposition of unipotents over von Neumann regular rings, Banach algebras, rings satisfying a stable range condition, rings with Euclidean algorithm and almost commutative rings. In the last section we list some open problems.

\section{Notation}
$\N$ denotes the set of positive integers. If $G$ is a group and $g,h\in G$, we let $g^h:=h^{-1}gh$, and $[g,h]:=ghg^{-1}h^{-1}$. If $g\in G$ and $H$ is a subgroup of $G$, we denote by $g^{H}$ the subgroup of $G$ generated by the set $\{g^h\mid h\in H\}$. By a ring  we mean an associative ring with $1\neq 0$. By an ideal of a ring we mean a twosided ideal.

Throughout the paper $R$ denotes a ring and $n$ a positive integer greater than $2$. We denote by $^n\!R$ the set of all rows $u=(u_1,\dots,u_n)$ with entries in $R$ and by $R^n$ the set of all columns $v=(v_1,\dots,v_n)^t$ with entries in $R$. Furthermore, the set of all $n\times n$ matrices over $R$ is denoted by $\Mat_n(R)$. The identity matrix in $\Mat_n(R)$ is denoted by $e$ or $e_{n\times n}$ and the matrix with a one at position $(i,j)$ and zeros elsewhere is denoted by $e^{ij}$. If $\sigma\in \Mat_{n
}(R)$, we denote the 
the entry of $\sigma$ at position $(i,j)$ by $\sigma_{ij}$. We denote the $i$-th row of $\sigma$ by $\sigma_{i*}$ and its $j$-th column by $\sigma_{*j}$. If $\sigma\in \Mat_n(R)$ is invertible, we denote the entry 
of $\sigma^{-1}$ at position $(i,j)$ by $\sigma'_{ij}$, the $i$-th row of $\sigma^{-1}$ by $\sigma'_{i*}$ and the $j$-th column of $\sigma^{-1}$ by $\sigma'_{*j}$.  

\section{The general linear group and its elementary subgroup} 

\begin{definition}
The group $\GL_{n}(R)$ consisting of all invertible elements of $\Mat_n(R)$ is called the {\it general linear group} of degree $n$ over $R$.
\end{definition}

\begin{definition}
Let $x\in R$ and $i,j\in\{1,\dots,n\}$ such that $i\neq j$. Then the matrix $t_{ij}(x):=e+xe^{ij}$ is called an {\it elementary transvection}. The subgroup $\E_n(R)$ of $\GL_n(R)$ generated by the elementary transvections is called the {\it elementary subgroup}.
\end{definition}

\begin{lemma}\label{lemelrel}
The relations
\begin{align*}
t_{ij}(x)t_{ij}(y)&=t_{ij}(x+y), \tag{R1}\\
[t_{ij}(x),t_{hk}(y)]&=e \tag{R2}\text{ and}\\
[t_{ij}(x),t_{jk}(y)]&=t_{ik}(xy) \tag{R3}\\
\end{align*}
hold where $i\neq k, j\neq h$ in $(R2)$ and $i\neq k$ in $(R3)$.
\end{lemma}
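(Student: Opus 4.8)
The plan is to reduce everything to the single bilinear rule $e^{ij}e^{kl}=\delta_{jk}e^{il}$ for the matrix units (where $\delta$ is the Kronecker delta), together with the definition $t_{ij}(x)=e+xe^{ij}$; all three relations then follow by direct expansion, the only real care being the bookkeeping of which $\delta$-terms survive under the stated index restrictions.

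First I would dispatch (R1). Expanding, $t_{ij}(x)t_{ij}(y)=(e+xe^{ij})(e+ye^{ij})=e+(x+y)e^{ij}+xy\,e^{ij}e^{ij}$, and since $i\neq j$ we have $e^{ij}e^{ij}=\delta_{ji}e^{ij}=0$, giving $t_{ij}(x+y)$. In particular this shows $t_{ij}(x)^{-1}=t_{ij}(-x)$, which I will use freely in the commutator computations.

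For (R2) and (R3) I would not expand the fourfold product directly but instead first compute the conjugate $t_{ij}(x)\,t_{hk}(y)\,t_{ij}(-x)$ and only afterwards multiply on the right by $t_{hk}(-y)$. Writing $t_{hk}(y)=e+ye^{hk}$, conjugation reduces to evaluating $t_{ij}(x)\,e^{hk}\,t_{ij}(-x)$, and a two-step expansion using the multiplication rule produces $e^{hk}$ plus correction terms carrying the factors $\delta_{jh}$ and $\delta_{ki}$; the subtle point is the identity $e^{hk}e^{ij}=\delta_{ki}e^{hj}$, so that it is the condition $i\neq k$ (not $k\neq j$) which annihilates one of the terms. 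Under the (R2) hypotheses $j\neq h$ and $i\neq k$ both $\delta$-factors vanish, so the conjugate is again $t_{hk}(y)$ and hence $[t_{ij}(x),t_{hk}(y)]=t_{hk}(y)t_{hk}(-y)=e$.

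For (R3) I set $h=j$ in the very same conjugation computation. Now $\delta_{jh}=1$ survives while $\delta_{ki}=0$ (using $i\neq k$), yielding $t_{ij}(x)\,t_{jk}(y)\,t_{ij}(-x)=e+ye^{jk}+xy\,e^{ik}$; multiplying on the right by $t_{jk}(-y)=e-ye^{jk}$ and noting that every remaining product of matrix units contains a factor $e^{jk}e^{jk}=0$ or $e^{ik}e^{jk}=\delta_{kj}e^{ik}=0$ (both vanishing since $j\neq k$) leaves exactly $e+xy\,e^{ik}=t_{ik}(xy)$. I expect no genuine obstacle beyond the careful tracking of the noncommutative factor order (so that $xy$ appears, not $yx$) and of the Kronecker deltas; the whole lemma is a routine but slightly delicate verification.
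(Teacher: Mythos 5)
Your proposal is correct: the paper's own proof is just the phrase ``straightforward computation,'' and your expansion via the matrix-unit rule $e^{ij}e^{kl}=\delta_{jk}e^{il}$ is precisely that computation carried out in detail, with the index conditions and the noncommutative factor order (yielding $xy$, not $yx$, consistent with the paper's convention $[g,h]=ghg^{-1}h^{-1}$) handled correctly. No gaps.
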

\begin{proof}
Straightforward computation.
\end{proof}

\begin{definition}\label{defp}
Let $i,j\in\{1,\dots,n\}$ such that $i\neq j$. Then the matrix $p_{ij}:=e+e^{ij}-e^{ji}-e^{ii}-e^{jj}=t_{ij}(1)t_{ji}(-1)t_{ij}(1)\in \E_{n}(R)$ is called a {\it generalised permutation matrix}. It is easy to show that $p_{ij}^{-1}=p_{ji}$. The subgroup of $\E_n(R)$ generated by the generalised permutation matrices is denoted by $\Perm_n(R)$.
\end{definition}

\begin{lemma}\label{lemp}
Let $\sigma\in \GL_n(R)$. Further let $x\in R$ and $i,j,k,l\in\{1,\dots,n\}$ such that $i\neq j$ and $k\neq l$. Then there are $\tau,\rho\in \Perm_n(R)$ such that $(\sigma^{\tau})_{kl}=\sigma_{ij}$ and $t_{kl}(x)^{\rho}=t_{ij}(x)$.
\end{lemma}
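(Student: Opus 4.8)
The plan is to reduce everything to one observation: conjugating a matrix by a generalised permutation matrix permutes the positions of its entries, introducing only a sign $\pm1$. First I would record the effect of a single $p_{ab}$. Writing $p_{ab}$ and $p_{ba}=p_{ab}^{-1}$ out explicitly and expanding $\sigma=\sum_{c,d}\sigma_{cd}e^{cd}$, one checks that conjugation by $p_{ab}$ simply interchanges the indices $a$ and $b$ throughout, up to signs. Since the transpositions generate the symmetric group and each $p_{ab}$ realises the transposition $(a\,b)$, every permutation $\pi$ of $\{1,\dots,n\}$ is realised by some $\tau\in\Perm_n(R)$, and for such a $\tau$ one obtains $(\sigma^{\tau})_{cd}=\varepsilon\,\sigma_{\pi(c)\pi(d)}$ with a sign $\varepsilon\in\{1,-1\}$. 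The only additional fact I need is that $p_{ab}^2$ is the diagonal matrix with $-1$ in positions $a,b$ and $1$ elsewhere; conjugation by it fixes every entry position and multiplies the $(c,d)$-entry by $-1$ precisely when exactly one of $c,d$ lies in $\{a,b\}$ (and by $+1$ otherwise). Both facts are immediate computations.

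For the first assertion I would argue as follows. Since $i\neq j$ and $k\neq l$, there is a permutation $\pi$ with $\pi(k)=i$ and $\pi(l)=j$; realising it yields $\tau_0\in\Perm_n(R)$ with $(\sigma^{\tau_0})_{kl}=\varepsilon\,\sigma_{ij}$ for some $\varepsilon\in\{1,-1\}$. If $\varepsilon=1$ we take $\tau=\tau_0$. If $\varepsilon=-1$, I would choose an index $m\notin\{k,l\}$, which exists because $n\geq 3$, and set $\tau=\tau_0\,p_{km}^2$; conjugating once more by the diagonal matrix $p_{km}^2$ multiplies the $(k,l)$-entry by $-1$ (only $k$, not $l$, lies in $\{k,m\}$), so that $(\sigma^{\tau})_{kl}=\sigma_{ij}$, with $\tau\in\Perm_n(R)$.

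The second assertion is entirely parallel. Writing $t_{kl}(x)^{\rho}=e+x\,\rho^{-1}e^{kl}\rho$, I would pick $\pi$ with $\pi(i)=k$ and $\pi(j)=l$ and realise it by $\rho_0\in\Perm_n(R)$, obtaining $t_{kl}(x)^{\rho_0}=t_{ij}(\varepsilon x)$ for a central sign $\varepsilon$; if $\varepsilon=-1$ I would correct by a further conjugation with $p_{im}^2$ for some $m\notin\{i,j\}$, which negates $e^{ij}$ and hence turns $t_{ij}(-x)$ into $t_{ij}(x)$. The only delicate point, and the single genuine use of the hypothesis $n\geq 3$, is the sign bookkeeping: because $\Perm_n(R)$ contains only sign changes affecting an even number of indices, I cannot flip the sign of one entry in isolation and need a third index $m$ to absorb the parity. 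This is the small obstacle on which the whole argument turns.
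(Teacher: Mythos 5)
Your proof is correct: the decomposition $p_{ab}=\Pi_{(ab)}\cdot\mathrm{diag}(1,\dots,-1,\dots,1)$, the formula $(\sigma^\tau)_{cd}=\pm\sigma_{\pi(c)\pi(d)}$, and the sign repair via $p_{km}^2$ (using $n\geq 3$) all check out, including getting the directions of $\pi$ right in the two assertions. The paper leaves this lemma as an ``easy exercise,'' and your argument is precisely the standard one intended, so there is nothing further to compare.
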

\begin{proof}
Easy exercise.
\end{proof}

\section{The key results}
\subsection{Simultaneous reduction in groups}

In this subsection $G$ denotes a group. 
\begin{definition}
Let $(a_1,b_1), (a_2,b_2)\in G\times G$. If there is an $g\in G$ such that 
\[a_2=[a_1^{-1},g]\text{ and }b_2=[g,b_1],\]
then we write $(a_1,b_1)\xrightarrow{g} (a_2,b_2)$. In this case $(a_1,b_1)$ is called {\it reducible to $(a_2,b_2)$ by $g$}. 
\end{definition}

\begin{definition}
If $(a_1,b_1),\dots, (a_{n+1},b_{n+1})\in G\times G$ and $g_1,\dots,g_{n}\in G$  such that 
\[(a_1,b_1) \xrightarrow{g_1} (a_2,b_2)\xrightarrow{g_2}\dots \xrightarrow{g_{n}} (a_{n+1},b_{n+1}),\]
then we write $(a_1,b_1) \xrightarrow{g_1,\dots,g_{n}}(a_{n+1},b_{n+1})$. In this case $(a_1,b_1)$ is called {\it reducible to $(a_{n+1},b_{n+1})$ by $g_1,\dots,g_n$}.
\end{definition}

Let $H$ be a subgroup of $G$. If $g\in G$ and $h\in H$, then we call $g^h$ an {\it $H$-conjugate} of $g$.
\begin{lemma}\label{lemredux}
Let $(a_1,b_1),(a_2,b_2)\in G\times G$. If $(a_1,b_1)\xrightarrow{g_1,\dots,g_n}(a_2,b_2)$ for some $g_1,\dots,g_n\in G$, then $a_2b_2$ is a product of $2^n$ $H$-conjugates of $a_1b_1$ and $(a_1b_1)^{-1}$ where $H$ is the subgroup of $G$ generated by $\{a_1,g_1,\dots, g_n\}$.
\end{lemma}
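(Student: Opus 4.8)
The plan is to proceed by induction on $n$, the number of reduction steps, with the single-step case $n=1$ carrying all the real content. For the base case I would just write out the definitions: with $a_2=[a_1^{-1},g_1]=a_1^{-1}g_1a_1g_1^{-1}$ and $b_2=[g_1,b_1]=g_1b_1g_1^{-1}b_1^{-1}$, the inner factor $g_1^{-1}g_1$ cancels in the product, giving $a_2b_2=a_1^{-1}g_1a_1b_1g_1^{-1}b_1^{-1}$. Setting $c:=a_1b_1$ and substituting $b_1^{-1}=c^{-1}a_1$ turns this into $a_2b_2=a_1^{-1}g_1cg_1^{-1}c^{-1}a_1$. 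I would then split this single word into two conjugates by inserting $a_1a_1^{-1}$ in the middle:
\[
a_2b_2=\bigl(a_1^{-1}g_1\,c\,g_1^{-1}a_1\bigr)\bigl(a_1^{-1}c^{-1}a_1\bigr)=c^{\,g_1^{-1}a_1}\,(c^{-1})^{a_1}.
\]
This exhibits $a_2b_2$ as a product of $2=2^1$ conjugates of $c$ and $c^{-1}$ by the elements $g_1^{-1}a_1$ and $a_1$, both of which lie in $\langle a_1,g_1\rangle$, settling the case $n=1$.

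For the inductive step I would split the chain as $(a_1,b_1)\xrightarrow{g_1}(a_2,b_2)\xrightarrow{g_2,\dots,g_n}(a_{n+1},b_{n+1})$ and apply the induction hypothesis to the tail, which has $n-1$ steps. This expresses $a_{n+1}b_{n+1}$ as a product of $2^{n-1}$ conjugates of $a_2b_2$ and $(a_2b_2)^{-1}$ by elements of the group $H':=\langle a_2,g_2,\dots,g_n\rangle$. The crucial observation is the nesting $H'\subseteq H$: indeed $g_2,\dots,g_n\in H$ trivially, while $a_2=[a_1^{-1},g_1]=a_1^{-1}g_1a_1g_1^{-1}$ lies in $\langle a_1,g_1\rangle\subseteq H$. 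Hence every conjugating element produced by the tail expansion already lies in $H$.

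Finally I would feed the base-case identity into this expansion. By the base case, $a_2b_2=c^{\,u_1}(c^{-1})^{u_2}$ with $u_1,u_2\in H$, so for any $h\in H'\subseteq H$ we get $(a_2b_2)^h=c^{\,u_1h}(c^{-1})^{u_2h}$, a product of two $H$-conjugates of $c$ and $c^{-1}$; likewise $\bigl((a_2b_2)^{-1}\bigr)^h=(c^{-1})^{u_2h}\,c^{\,u_1h}$ is again such a product, since inverting a conjugate of $c$ yields a conjugate of $c^{-1}$ and conversely. Replacing each of the $2^{n-1}$ factors by its two-term expansion presents $a_{n+1}b_{n+1}$ as a product of $2\cdot 2^{n-1}=2^n$ $H$-conjugates of $c=a_1b_1$ and $c^{-1}=(a_1b_1)^{-1}$, completing the induction.

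The only genuine obstacle here is bookkeeping rather than computation: one must track precisely which subgroup each conjugating element belongs to and verify the nesting $H'\subseteq H$ that lets the factor counts multiply cleanly, while checking that passing to inverses keeps us inside the allowed set $\{c,c^{-1}\}$. The conventions $[g,h]=ghg^{-1}h^{-1}$ and $g^h=h^{-1}gh$ must also be applied consistently, since the cancellation that drives the base case depends on them.
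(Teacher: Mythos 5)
Your proof is correct and takes essentially the same route as the paper: the identical single-step identity $a_2b_2=(a_1b_1)^{g_1^{-1}a_1}\cdot((a_1b_1)^{-1})^{a_1}$ as the base case, followed by induction hinging on the observation that the intermediate first components remain in $H$ (the paper's terse remark that $a_1,\dots,a_n\in H$ is exactly your nesting $H'\subseteq H$, since $a_2=[a_1^{-1},g_1]\in\langle a_1,g_1\rangle$). One small slip that does not affect the argument: with the convention $g^h=h^{-1}gh$ one has $((a_2b_2)^{-1})^h=c^{u_2h}(c^{-1})^{u_1h}$ rather than $(c^{-1})^{u_2h}\,c^{u_1h}$, but either way it is a product of two $H$-conjugates of $c$ and $c^{-1}$, which is all your counting needs.
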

\begin{proof}
Assume that $n=1$. Then 
\[a_2b_2=[a_1^{-1},g_1][g_1,b_1]=(a_1b_1)^{g_1^{-1}a_1}\cdot((a_1b_1)^{-1})^{a_1}.\]
Hence $a_2b_2$ is a product of two $H$-conjugates of $a_1b_1$ and $(a_1b_1)^{-1}$. The general case follows by induction (note that $a_1,\dots,a_n\in H$).
\end{proof}

\subsection{Application to general linear groups}

\begin{proposition}\label{propkey}
Let $\sigma\in \GL_n(R)$. Let $x_1,\dots,x_{n},y\in R$ such that $y\sum\limits_{p=1}^{n}\sigma_{1p}x_p=0$. Then the following holds. 
\begin{enumerate}[(i)]
\itemsep0pt 
\item Suppose that $x_n=0$, $y=1$. Then for any $k\neq l$ and $a,b\in R$, the elementary transvection $t_{kl}(ax_1b)$ is a product of $8$ $\E_n(R)$-conjugates of $\sigma$ and $\sigma^{-1}$.
\item Suppose that $x_n=1$, $y=1$. Then for any $k\neq l$ and $a,b\in R$, the elementary transvection $t_{kl}(ax_1b)$ is a product of $8$ $\E_n(R)$-conjugates of $\sigma$ and $\sigma^{-1}$.
\item Suppose that $x_1=1$. Then for any $k\neq l$ and $a,b\in R$, the elementary transvection $t_{kl}(ayb)$ is a product of $8$ $\E_n(R)$-conjugates of $\sigma$ and $\sigma^{-1}$.
\end{enumerate}
\end{proposition}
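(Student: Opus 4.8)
The plan is to feed everything through Lemma \ref{lemredux}. Since $8=2^3$, it suffices to produce, for one convenient pair $(k,l)$, a three-term reduction $(e,\sigma)\xrightarrow{g_1,g_2,g_3}(a_4,b_4)$ with $g_1,g_2,g_3\in\E_n(R)$ and $a_4b_4=t_{kl}(ax_1b)$ (resp.\ $t_{kl}(ayb)$ in (iii)): then Lemma \ref{lemredux}, applied with $a_1=e$, exhibits $a_4b_4$ as a product of $2^3=8$ conjugates of $\sigma$ and $\sigma^{-1}$ by elements of $H=\langle g_1,g_2,g_3\rangle\subseteq\E_n(R)$, which are exactly $\E_n(R)$-conjugates. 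By Lemma \ref{lemp} I may fix any single convenient pair $(k,l)$, since once $t_{kl}(ax_1b)$ is expressed as such a product for one pair, conjugating by a suitable $\rho\in\Perm_n(R)\subseteq\E_n(R)$ transports it to any other pair without changing the form of the expression. In particular I may choose an auxiliary index $i\notin\{k,l\}$, which exists because $n\geq 3$. Note finally that, starting from $(e,\sigma)$, all $a_m=e$ and the reduction is just iterated commutation, $a_4b_4=[g_3,[g_2,[g_1,\sigma]]]$.

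The two outer steps are purely cosmetic and serve only to install the prescribed multipliers $a$ and $b$. Suppose the first reduction has been arranged so that the core commutator $c_2:=[g_1,\sigma]$ is the single elementary transvection $t_{ik}(x_1)$ (cases (i),(ii)), resp.\ $t_{ik}(y)$ (case (iii)). Taking $g_2:=t_{kl}(b)$ and $g_3:=t_{ki}(a)$ and applying $(R3)$ twice then gives $c_3=[g_2,c_2]=t_{il}(-x_1b)$ and $c_4=[g_3,c_3]=t_{kl}(-ax_1b)$; replacing $a$ by $-a$ at the outset yields $t_{kl}(ax_1b)$, and case (iii) is identical with $x_1$ replaced by $y$. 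The index hypotheses of $(R3)$ hold precisely because $i\notin\{k,l\}$ and $k\neq l$. Thus everything reduces to the single assertion that there is an elementary $g_1$ with $[g_1,\sigma]=t_{ik}(\pm x_1)$ (resp.\ $t_{ik}(\pm y)$).

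This core assertion is where the hypothesis $y\sum_{p=1}^{n}\sigma_{1p}x_p=0$ enters, and it is the step I expect to be the main obstacle. Writing $v:=\sigma x\in R^n$, the relation says $y v_1=0$, and this is exactly what renders the natural building blocks nilpotent: the matrix $e+\sum_{p,q}x_p\sigma_{1q}e^{pq}$ has square $x\,(\sum_p\sigma_{1p}x_p)\,\sigma_{1*}=0$, and (when $y=1$, so $v_1=0$) the off-diagonal matrix $e+\sum_{i\geq 2}v_ie^{i1}$ is unipotent as well; in case (iii) one uses the analogous matrix weighted by $y$, nilpotent because $yv_1=0$. The three cases are distinguished only by the normalisations $x_n=0$, $x_n=1$, $x_1=1$, which fix the precise shape of $g_1$. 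The difficulty is that a commutator $[g_1,\sigma]$ with $g_1$ assembled from the $x_p$ (and $y$) alone carries spurious rank-one terms; to cancel these and leave the bare coefficient $x_1$ (resp.\ $y$) one must also build entries of $\sigma^{-1}$ into $g_1$, so that the products $\sigma^{-1}\sigma$ collapse to Kronecker deltas while the orthogonality $y v_1=0$ kills the remaining off-target contributions. Determining $g_1$ explicitly and verifying that exactly $t_{ik}(\pm x_1)$ (resp.\ $t_{ik}(\pm y)$) survives is the technical heart of the proof; once it is done, the two preceding paragraphs dispose of all three cases uniformly.
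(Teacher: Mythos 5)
Your outer machinery is sound (starting Lemma \ref{lemredux} at $(e,\sigma)$ does give $2^3=8$, and the two $R3$-steps installing $a$ and $b$ are correct), but the proof collapses at exactly the point you defer to the end: the ``core assertion'' that there exists $g_1\in\E_n(R)$ with $[g_1,\sigma]=t_{ik}(\pm x_1)$ (resp.\ $t_{ik}(\pm y)$) is not a technical step left open --- it is false in general. If $[g_1,\sigma]=t_{ik}(c)$ then $g_1\sigma g_1^{-1}=t_{ik}(c)\sigma$, so $\sigma$ and $t_{ik}(c)\sigma$ must be conjugate; over a commutative ring this forces equal traces, whereas $\operatorname{tr}(t_{ik}(c)\sigma)=\operatorname{tr}(\sigma)+c\,\sigma_{ki}$. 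Concretely take $R=\Z$, $n=3$ and
\[\sigma=\begin{pmatrix}2&1&1\\1&1&1\\1&1&0\end{pmatrix}\in\GL_3(\Z),\]
all of whose off-diagonal entries equal $1$. For case (ii) take $(x_1,x_2,x_3)=(1,-3,1)$, $y=1$ (so $\sigma_{11}x_1+\sigma_{12}x_2+\sigma_{13}x_3=0$); then $c=\pm x_1=\pm 1$ and $c\,\sigma_{ki}=\pm1\neq 0$ for every $i\neq k$, so no such $g_1$ exists even in $\GL_3(\mathbb{Q})$, let alone in $\E_3(\Z)$. The same $\sigma$ refutes the core assertion for case (i) (take $(x_1,x_2,x_3)=(1,-2,0)$) and case (iii) (take $(x_1,x_2,x_3)=(1,-1,-1)$, $y=1$). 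So no choice of $g_1$, however cleverly assembled from the $x_p$ and entries of $\sigma^{-1}$, can make a single commutator with $\sigma$ equal to a transvection with the required parameter.

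The structural lesson is that your scheme throws away the one degree of freedom that makes the paper's argument work: by starting the reduction at $a_1=e$, every intermediate object is an honest iterated commutator of $\sigma$, and you are forced to demand that one of them be a transvection. The paper instead applies Lemma \ref{lemredux} to a pair whose \emph{first} component is a nontrivial elementary matrix. In case (i) it takes $(a_1,b_1)=(\tau,\sigma\tau^{-1}\sigma^{-1})$ with $\tau=\prod_{p=1}^{n-1}t_{pn}(x_p)$, so that $a_1b_1=[\tau,\sigma]$ is a product of \emph{two} $\E_n(R)$-conjugates of $\sigma^{\pm1}$, and the hypothesis $\sum_{p=1}^{n-1}\sigma_{1p}x_p=0$ is used only to make the first row of $b_1$ equal to $e_{1*}$; then just two reduction steps, by $t_{21}(a)$ and $t_{n1}(-b)$, drive the pair to $(t_{21}(ax_1b),e)$, giving $2^2\cdot 2=8$. (Cases (ii) and (iii) are analogous: $(\tau,\tau^{-1}\sigma^{-1})$ with three steps and $a_1b_1=\sigma^{-1}$, resp.\ $(t_{n1}(-y),t_{n1}(y)^{\sigma\tau})$ with two steps.) At no intermediate stage is either component a transvection --- only the final \emph{product} $a_mb_m$ is --- which is precisely the simultaneous bookkeeping that evades your trace obstruction. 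To repair your proposal you would have to abandon the requirement that $[g_1,\sigma]$ be a transvection and track a genuine pair, at which point you have reconstructed the paper's proof.
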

\begin{proof}
\begin{enumerate}[(i)]
\item Set $\tau:=\prod\limits_{p=1}^{n-1}t_{pn}(x_p)$. Clearly $(\sigma\tau^{-1})_{1*}=\sigma_{1*}$ and hence $(\sigma\tau^{-1}\sigma^{-1})_{1*}=e_{1*}$. A straightforward computation shows
\[(\tau,\sigma\tau^{-1}\sigma^{-1})\xrightarrow{t_{21}(a),t_{n1}(-b)}(t_{21}(ax_1b), e).\] 
It follows from Lemma \ref{lemredux} that $t_{21}(ax_1b)$ is a product of $4$ $\E_n(R)$-conjugates of $[\tau,\sigma]$ and $[\tau,\sigma]^{-1}$. Thus $t_{21}(ax_1b)$ is a product of $8$ $\E_n(R)$-conjugates of $\sigma$ and $\sigma^{-1}$. Assertion (i) follows now from Lemma \ref{lemp}.
\item Set $\tau:=\prod\limits_{p=1}^{n-1}t_{pn}(x_p)$. Clearly $(\sigma\tau)_{1n}=0$. A straightforward computation shows that
\[(\tau,\tau^{-1}\sigma^{-1})\xrightarrow{t_{21}(a),t_{n1}(-b),t_{n2}(1)}(t_{n1}(ax_1b), e).\] 
It follows from Lemma \ref{lemredux} that $t_{n1}(ax_1b)$ is a product of $8$ $\E_n(R)$-conjugates of $\sigma$ and $\sigma^{-1}$. Assertion (ii) follows now from Lemma \ref{lemp}.
\item Clearly $t_{n1}(y)^{\sigma}=e+\sigma'_{*n}y\sigma_{1*}$. Set $\tau:=\prod\limits_{p=2}^{n}t_{p1}(x_p)$. One checks easily that $(t_{n1}(y)^{\sigma\tau})_{*1}=e_{*1}$. A straightforward computation shows that
\[(t_{n1}(-y),t_{n1}(y)^{\sigma\tau})\xrightarrow{t_{12}(b),t_{1n}(-a)}(t_{12}(ayb), e).\] 
It follows from Lemma \ref{lemredux} that $t_{12}(ayb)$ is a product of $4$ $\E_n(R)$-conjugates of $t_{n1}(-y)t_{n1}(y)^{\sigma\tau}=(\sigma^{-1})^{\tau t_{n1}(y)}\cdot \sigma^\tau$ and its inverse. Thus $t_{12}(ayb)$ is a product of $8$ $\E_n(R)$-conjugates of $\sigma$ and $\sigma^{-1}$. Assertion (iii) follows now from Lemma \ref{lemp}.
\end{enumerate}
\end{proof}

\begin{corollary}\label{corkeyB}
Let $\sigma\in \GL_n(R)$ such that $\sigma_{1n}=0$. Then for any $2\leq j\leq n$, $k\neq l$ and $a,b\in R$, the elementary transvection $t_{kl}(a\sigma'_{1j}b)$ is a product of $8$ $\E_n(R)$-conjugates of $\sigma$ and $\sigma^{-1}$.
\end{corollary}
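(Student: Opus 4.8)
The plan is to obtain this corollary as an immediate specialization of Proposition \ref{propkey}(i). That part of the proposition produces $t_{kl}(ax_1b)$ for any choice of $x_1,\dots,x_n\in R$ with $x_n=0$ and $y=1$ satisfying $\sum_{p=1}^{n}\sigma_{1p}x_p=0$. Since the target entry is $\sigma'_{1j}$, the natural move is to let the $x_p$ be (truncated) entries of the $j$-th column of $\sigma^{-1}$, so that $x_1=\sigma'_{1j}$ is exactly the quantity we want to extract into the transvection.

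First I would record the orthogonality relation coming from $\sigma\sigma^{-1}=e$: for every $j$ the sum $\sum_{p=1}^{n}\sigma_{1p}\sigma'_{pj}$ equals the $(1,j)$-entry of the identity matrix, hence vanishes whenever $j\ge 2$. This is precisely the linear constraint demanded by the proposition, taken with $y=1$.

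The single point that actually uses the extra hypothesis $\sigma_{1n}=0$ is the requirement $x_n=0$ in part (i). We cannot simply take $x_n=\sigma'_{nj}$, since that entry need not vanish. Instead I would set
\[
x_p:=\sigma'_{pj}\ (1\le p\le n-1),\qquad x_n:=0,\qquad y:=1,
\]
and then verify that the constraint survives the truncation: because $\sigma_{1n}=0$, the dropped term $\sigma_{1n}\sigma'_{nj}$ is zero, so for $j\ge 2$
\[
\sum_{p=1}^{n}\sigma_{1p}x_p=\sum_{p=1}^{n-1}\sigma_{1p}\sigma'_{pj}=\sum_{p=1}^{n}\sigma_{1p}\sigma'_{pj}=0 .
\]
Thus $(x_1,\dots,x_n)$ with $x_n=0$ and $y=1$ meets both hypotheses of Proposition \ref{propkey}(i), and since $x_1=\sigma'_{1j}$, the conclusion yields $t_{kl}(a\sigma'_{1j}b)$ as a product of $8$ $\E_n(R)$-conjugates of $\sigma$ and $\sigma^{-1}$, as asserted.

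The main obstacle here is purely conceptual rather than computational: one must notice that the assumption $\sigma_{1n}=0$ is exactly what permits truncating the orthogonality relation to the indices $1,\dots,n-1$ while keeping the sum zero, thereby manufacturing a valid input with $x_n=0$. Once that observation is in place, the statement is nothing more than a direct invocation of the already-established Proposition \ref{propkey}(i).
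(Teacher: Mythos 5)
Your proof is correct and is essentially identical to the paper's own argument: the paper likewise observes that $\sum_{p=1}^{n-1}\sigma_{1p}\sigma'_{pj}=0$ for $2\leq j\leq n$ (which holds precisely because $\sigma_{1n}=0$ lets one truncate the orthogonality relation $\sum_{p=1}^{n}\sigma_{1p}\sigma'_{pj}=0$) and then invokes Proposition \ref{propkey}(i) with $x_p=\sigma'_{pj}$ for $p\leq n-1$, $x_n=0$, $y=1$. You have merely spelled out the truncation step that the paper labels ``clearly.''
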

\begin{proof}
Clearly we have $\sum\limits_{p=1}^{n-1}\sigma_{1p}\sigma'_{pj}=0$ for any $2\leq j\leq n$. Hence, by Proposition \ref{propkey}(i), the elementary transvection $t_{kl}(a\sigma'_{1j}b)$ is a product of $8$ $\E_n(R)$-conjugates of $\sigma$ and $\sigma^{-1}$.
\end{proof}

\begin{corollary}\label{corkeyA}
Let $\sigma\in \GL_n(R)$ such that $\sigma_{11}$ is right invertible. Then for any $k\neq l$ and $a,b\in R$, the elementary transvection $t_{kl}(a\sigma_{1n}b)$ is a product of $8$ $\E_n(R)$-conjugates of $\sigma$ and $\sigma^{-1}$.
\end{corollary}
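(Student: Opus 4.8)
The plan is to deduce the statement from Proposition \ref{propkey}(ii), using the right invertibility of $\sigma_{11}$ to manufacture a suitable relation $\sum_{p=1}^{n}\sigma_{1p}x_p=0$. Fix $c\in R$ with $\sigma_{11}c=1$, and choose the vector
\[x_1:=-c\sigma_{1n},\quad x_2=\dots=x_{n-1}:=0,\quad x_n:=1,\quad y:=1.\]
Then, using $\sigma_{11}c=1$,
\[y\sum_{p=1}^{n}\sigma_{1p}x_p=\sigma_{11}(-c\sigma_{1n})+\sigma_{1n}=-(\sigma_{11}c)\sigma_{1n}+\sigma_{1n}=0,\]
so the hypothesis of Proposition \ref{propkey} holds; moreover $x_n=1$ and $y=1$, which is exactly the situation of part (ii). Hence for any $k\neq l$ and any $a,b\in R$ the transvection $t_{kl}(ax_1b)=t_{kl}\bigl(a(-c\sigma_{1n})b\bigr)$ is a product of $8$ $\E_n(R)$-conjugates of $\sigma$ and $\sigma^{-1}$.

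It remains to replace the coefficient $-c\sigma_{1n}$ by $\sigma_{1n}$ equipped with an arbitrary left factor. Given arbitrary $a,b\in R$, I would invoke the previous paragraph with $-a\sigma_{11}$ in place of its left coefficient and $b$ unchanged: since $\sigma_{11}c=1$,
\[(-a\sigma_{11})\,x_1\,b=(-a\sigma_{11})(-c\sigma_{1n})b=a(\sigma_{11}c)\sigma_{1n}b=a\sigma_{1n}b.\]
Therefore $t_{kl}(a\sigma_{1n}b)$ is a product of $8$ $\E_n(R)$-conjugates of $\sigma$ and $\sigma^{-1}$, which is the assertion. This is the exact analogue of the derivation of Corollary \ref{corkeyB}, with the entries $\sigma'_{1j}$ of the inverse replaced by the single entry $\sigma_{1n}$.

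The only point requiring care is that right invertibility of $\sigma_{11}$ (rather than two-sided invertibility) is used \emph{twice}: once to annihilate the sum $\sum_p\sigma_{1p}x_p$, and once to recover an arbitrary left coefficient. Equivalently, the map $a\mapsto -ac$ is already surjective, because $\sigma_{11}c=1$ exhibits $\sigma_{11}$ as a left inverse of $c$, whence $Rc=R$; any right inverse $c$ will do. I expect this coefficient bookkeeping to be the only nontrivial step, the verification of the relation and the appeal to Proposition \ref{propkey}(ii) being immediate.
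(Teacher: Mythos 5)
Your proposal is correct and is essentially the paper's own proof: the paper likewise takes a right inverse $z$ of $\sigma_{11}$, notes $-\sigma_{11}z\sigma_{1n}+\sigma_{1n}=0$, and applies Proposition \ref{propkey}(ii) with $x_1=-z\sigma_{1n}$, $x_n=1$, $y=1$. The only difference is that you spell out the left-coefficient substitution $a\mapsto -a\sigma_{11}$ needed to pass from $t_{kl}(ax_1b)$ to $t_{kl}(a\sigma_{1n}b)$, which the paper leaves implicit in the arbitrariness of $a$.
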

\begin{proof}
Let $z$ be a right inverse of $\sigma_{11}$. Then $-\sigma_{11}z\sigma_{1n}+\sigma_{1n}=0$. Hence, by Proposition \ref{propkey}(ii), the elementary transvection $t_{kl}(a\sigma_{1n}b)$ is a product of $8$ $\E_n(R)$-conjugates of $\sigma$ and $\sigma^{-1}$.
\end{proof}

\begin{corollary}\label{corkeyC}
Let $\sigma\in \GL_n(R)$ such that $\sigma_{1n}$ is an idempotent. Then for any $k\neq l$ and $a,b\in R$, the elementary transvection $t_{kl}(a\sigma_{1n}b)$ is a product of $8$ $\E_n(R)$-conjugates of $\sigma$ and $\sigma^{-1}$. In particular, if $\sigma_{1n}=1$, then  $\E_n(R)\subseteq \sigma^{\E_n(R)}$.
\end{corollary}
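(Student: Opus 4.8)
The plan is to deduce the statement directly from Proposition~\ref{propkey}(iii), exploiting the relation $\sigma_{1n}^2=\sigma_{1n}$ to produce the required annihilation condition $y\sum_{p=1}^{n}\sigma_{1p}x_p=0$ with $y=\sigma_{1n}$. Since part (iii) outputs the transvection $t_{kl}(ayb)$, choosing $y=\sigma_{1n}$ will yield exactly $t_{kl}(a\sigma_{1n}b)$, so no absorption step (as in the proof of Corollary~\ref{corkeyA}) will be needed.

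First I would set $y:=\sigma_{1n}$ and look for a column $x=(x_1,\dots,x_n)^t$ with $x_1=1$ satisfying $\sigma_{1n}\sum_{p=1}^{n}\sigma_{1p}x_p=0$. The natural idea is to involve only the coordinates $1$ and $n$: put $x_1=1$, $x_p=0$ for $2\leq p\leq n-1$, and $x_n=-\sigma_{11}$ (this is legitimate since $n\geq 3$, so the indices $1$ and $n$ are distinct). Then $\sum_{p=1}^{n}\sigma_{1p}x_p=\sigma_{11}-\sigma_{1n}\sigma_{11}$, and multiplying on the left by $y=\sigma_{1n}$ and using $\sigma_{1n}^2=\sigma_{1n}$ gives $\sigma_{1n}\sigma_{11}-\sigma_{1n}^2\sigma_{11}=0$. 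This is the only place where idempotency enters, and it is precisely what forces the cancellation.

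With the hypotheses of Proposition~\ref{propkey}(iii) verified ($x_1=1$ together with the displayed annihilation), I would invoke it to conclude that $t_{kl}(ayb)=t_{kl}(a\sigma_{1n}b)$ is a product of $8$ $\E_n(R)$-conjugates of $\sigma$ and $\sigma^{-1}$ for every $k\neq l$ and $a,b\in R$. For the final assertion I would specialise to $\sigma_{1n}=1$ (which is idempotent): then $t_{kl}(x)=t_{kl}(x\cdot 1\cdot 1)$ lies in $\sigma^{\E_n(R)}$ for all $k\neq l$ and $x\in R$, and since these transvections generate $\E_n(R)$ the inclusion $\E_n(R)\subseteq\sigma^{\E_n(R)}$ follows at once.

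The only genuine obstacle is guessing the correct relation, that is, realising that one should target part (iii) with $y=\sigma_{1n}$ rather than parts (i) or (ii): in (i) the constraint $x_n=0$ discards column $n$, where the idempotent sits, while in (ii) placing $\sigma_{1n}$ into the first coordinate would require solving $\sigma_{11}x_1+\dots=-\sigma_{1n}$, for which we have no control over the remaining entries of the first row. Part (iii) sidesteps this because the coefficient $y$ is free and idempotency by itself supplies the annihilation; once this is seen, the verification is a one-line computation.
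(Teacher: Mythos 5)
Your proof is correct and is essentially identical to the paper's: the paper likewise applies Proposition~\ref{propkey}(iii) with $y=\sigma_{1n}$ via the relation $\sigma_{1n}(\sigma_{11}-\sigma_{1n}\sigma_{11})=0$, which corresponds exactly to your choice $x_1=1$, $x_n=-\sigma_{11}$, $x_p=0$ otherwise. Your explicit treatment of the final assertion (that the transvections $t_{kl}(x\cdot 1\cdot 1)$ generate $\E_n(R)$ inside $\sigma^{\E_n(R)}$) is left implicit in the paper but is the intended argument.
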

\begin{proof}
Clearly we have $\sigma_{1n}(\sigma_{11}-\sigma_{1n}\sigma_{11})=0$. Hence, by Proposition \ref{propkey}(iii), the elementary transvection $t_{kl}(a\sigma_{1n}b)$ is a product of $8$ $\E_n(R)$-conjugates of $\sigma$ and $\sigma^{-1}$.
\end{proof}

\begin{corollary}\label{corkeyD}
Let $\sigma\in \GL_n(R)$. Let $x_1,\dots,x_{n-1}\in R$ such that $\sum\limits_{p=1}^{n-1}\sigma_{1p}x_p+\sigma_{1n}=0$. Then for any $2\leq j\leq n$, $k\neq l$ and $a,b\in R$, the elementary transvection $t_{kl}(a\sigma'_{1j}b)$ is a product of $16$ $\E_n(R)$-conjugates of $\sigma$ and $\sigma^{-1}$.
\end{corollary}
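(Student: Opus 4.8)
The plan is to express $t_{kl}(a\sigma'_{1j}b)$ as a product of two elementary transvections with the same indices $k,l$, each of which is a product of $8$ $\E_n(R)$-conjugates of $\sigma$ and $\sigma^{-1}$, so that the total count is $16$. The reason Proposition \ref{propkey}(i) cannot be applied directly with $x_p:=\sigma'_{pj}$ is that the required annihilation relation $\sum_{p=1}^{n-1}\sigma_{1p}\sigma'_{pj}=0$ fails in general: since $\sum_{p=1}^{n}\sigma_{1p}\sigma'_{pj}=\delta_{1j}=0$ for $j\geq 2$, one only obtains $\sum_{p=1}^{n-1}\sigma_{1p}\sigma'_{pj}=-\sigma_{1n}\sigma'_{nj}$, which need not vanish unless $\sigma_{1n}=0$ (the case already settled in Corollary \ref{corkeyB}). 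The idea is therefore to split off a correction term that cancels this defect exactly.

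Using the relation $(R1)$, factor
\[
t_{kl}(a\sigma'_{1j}b)=t_{kl}\bigl(a(\sigma'_{1j}-x_1\sigma'_{nj})b\bigr)\cdot t_{kl}\bigl(ax_1\sigma'_{nj}b\bigr).
\]
For the first factor, set $\tilde{x}_p:=\sigma'_{pj}-x_p\sigma'_{nj}$ for $1\leq p\leq n-1$, $\tilde{x}_n:=0$, and $y:=1$. A short computation combining $\sum_{p=1}^{n-1}\sigma_{1p}\sigma'_{pj}=-\sigma_{1n}\sigma'_{nj}$ with the hypothesis $\sum_{p=1}^{n-1}\sigma_{1p}x_p=-\sigma_{1n}$ gives $\sum_{p=1}^{n-1}\sigma_{1p}\tilde{x}_p=-\sigma_{1n}\sigma'_{nj}+\sigma_{1n}\sigma'_{nj}=0$, so the hypothesis of Proposition \ref{propkey}(i) is met. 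As $\tilde{x}_1=\sigma'_{1j}-x_1\sigma'_{nj}$, Proposition \ref{propkey}(i) shows that $t_{kl}\bigl(a(\sigma'_{1j}-x_1\sigma'_{nj})b\bigr)$ is a product of $8$ $\E_n(R)$-conjugates of $\sigma$ and $\sigma^{-1}$.

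For the second factor, set $x_n:=1$ and $y:=1$; then the hypothesis yields $\sum_{p=1}^{n}\sigma_{1p}x_p=\sum_{p=1}^{n-1}\sigma_{1p}x_p+\sigma_{1n}=0$, so the hypothesis of Proposition \ref{propkey}(ii) holds for the given $x_1,\dots,x_{n-1}$. Applying Proposition \ref{propkey}(ii) with right multiplier $\sigma'_{nj}b$ in place of $b$ shows that $t_{kl}\bigl(ax_1\sigma'_{nj}b\bigr)$ is a product of $8$ $\E_n(R)$-conjugates of $\sigma$ and $\sigma^{-1}$. Since both factors commute (they share the indices $k,l$), their product is $t_{kl}(a\sigma'_{1j}b)$, giving the claimed bound of $16$.

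The main obstacle is locating the corrected vector $\tilde{x}_p=\sigma'_{pj}-x_p\sigma'_{nj}$: it is precisely the modification of the naive guess $x_p=\sigma'_{pj}$ that restores the annihilation relation required by Proposition \ref{propkey}(i), at the price of introducing the term $x_1\sigma'_{nj}$, which is then absorbed by Proposition \ref{propkey}(ii). Once this vector is identified, both verifications reduce to routine manipulations with $\sigma\sigma^{-1}=e$ and the given relation. A tempting alternative would be to replace $\sigma$ by $\rho:=\sigma\prod_{p=1}^{n-1}t_{pn}(x_p)$, which satisfies $\rho_{1n}=0$, and to invoke Corollary \ref{corkeyB}; but this does not work directly, because Corollary \ref{corkeyB} would express the transvection through $\E_n(R)$-conjugates of $\rho$ rather than of $\sigma$. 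This is why I would instead apply Proposition \ref{propkey} to $\sigma$ itself, once in form (i) and once in form (ii).
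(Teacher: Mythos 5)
Your proof is correct and is essentially identical to the paper's own argument: the paper likewise splits $t_{kl}(a\sigma'_{1j}b)$ into $t_{kl}(a(\sigma'_{1j}-x_1\sigma'_{nj})b)\cdot t_{kl}(ax_1\sigma'_{nj}b)$, applies Proposition~\ref{propkey}(i) to the corrected vector $\sigma'_{pj}-x_p\sigma'_{nj}$ and Proposition~\ref{propkey}(ii) to the second factor, for a total of $8+8=16$ conjugates. Your write-up simply makes explicit the verification that the paper labels ``one checks easily.''
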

\begin{proof}
One checks easily that $\sum\limits_{p=1}^{n-1}\sigma_{1p}(\sigma'_{pj}-x_p\sigma'_{nj})=0$. Hence, by Proposition \ref{propkey}(i), the elementary transvection $t_{kl}(a(\sigma'_{1j}-x_1\sigma'_{nj})b)$ is a product of $8$ $\E_n(R)$-conjugates of $\sigma$ and $\sigma^{-1}$. By Proposition \ref{propkey}(ii), $t_{kl}(ax_1\sigma'_{nj}b)$ is a product of $8$ $\E_n(R)$-conjugates of $\sigma$ and $\sigma^{-1}$. Thus $t_{kl}(a\sigma'_{1j}b)$ is a product of $16$ $\E_n(R)$-conjugates of $\sigma$ and $\sigma^{-1}$.
\end{proof}

\section{RDU over commutative rings}
\begin{theorem}[cf. {\cite[Theorem 12]{preusser2}}]\label{thmcomm}
Suppose that $R$ is commutative. Let $\sigma\in \GL_n(R)$, $i\neq j$, $k\neq l$ and $a\in R$. Then 
\begin{enumerate}[(i)]
\itemsep0pt 
\item $t_{kl}(a\sigma_{ij})$ is a product of $8$ $\E_n(R)$-conjugates of $\sigma$ and $\sigma^{-1}$ and
\item $t_{kl}(a(\sigma_{ii}-\sigma_{jj}))$ is a product of $24$ $\E_n(R)$-conjugates of $\sigma$ and $\sigma^{-1}$.
\end{enumerate}
\end{theorem}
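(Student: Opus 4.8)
The plan is to reduce both parts to Proposition~\ref{propkey}(i), using commutativity only to manufacture the orthogonality relation it requires, and to handle the diagonal case by exposing $\sigma_{ii}-\sigma_{jj}$ as an off-diagonal entry of a single elementary conjugate of $\sigma$.

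For part (i) I would first use Lemma~\ref{lemp} to move the entry $\sigma_{ij}$ into the first row. Concretely, choose $\tau\in\Perm_n(R)$ with $(\sigma^\tau)_{12}=\sigma_{ij}$ and set $\bar\sigma:=\sigma^\tau$; since $\tau\in\E_n(R)$, every $\E_n(R)$-conjugate of $\bar\sigma$ (resp.\ $\bar\sigma^{-1}$) is an $\E_n(R)$-conjugate of $\sigma$ (resp.\ $\sigma^{-1}$), so the conjugacy count is not inflated. Now I want to apply Proposition~\ref{propkey}(i) to $\bar\sigma$ with $x_1=\bar\sigma_{12}$, so that its conclusion delivers $t_{kl}(a\bar\sigma_{12})=t_{kl}(a\sigma_{ij})$. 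The only hypothesis to arrange is $\sum_{p=1}^{n-1}\bar\sigma_{1p}x_p=0$ with $x_n=0$, and this is exactly where commutativity enters: taking $x_1=\bar\sigma_{12}$, $x_2=-\bar\sigma_{11}$ and $x_p=0$ otherwise (legitimate since $n\geq 3$, so in particular $x_n=0$), the sum collapses to $\bar\sigma_{11}\bar\sigma_{12}-\bar\sigma_{12}\bar\sigma_{11}$, which vanishes because $R$ is commutative. Proposition~\ref{propkey}(i) (with $y=1$ and $b=1$) then yields that $t_{kl}(a\sigma_{ij})$ is a product of $8$ $\E_n(R)$-conjugates of $\sigma$ and $\sigma^{-1}$.

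For part (ii) I would surface the diagonal difference and then clean up with part (i). Put $\rho:=\sigma^{t_{ij}(1)}=t_{ij}(-1)\sigma\,t_{ij}(1)$. Multiplying out $t_{ij}(-1)=e-e^{ij}$ and $t_{ij}(1)=e+e^{ij}$ gives the single identity
\[\rho_{ij}=(\sigma_{ii}-\sigma_{jj})+(\sigma_{ij}-\sigma_{ji}),\]
so that, by relation $(R1)$,
\[t_{kl}(a(\sigma_{ii}-\sigma_{jj}))=t_{kl}(a\rho_{ij})\,t_{kl}(-a\sigma_{ij})\,t_{kl}(a\sigma_{ji}).\]
Each of the three factors is a product of $8$ $\E_n(R)$-conjugates of $\sigma$ and $\sigma^{-1}$: the outer two by part (i) applied to $\sigma$ (the entries $\sigma_{ij},\sigma_{ji}$ are off-diagonal since $i\neq j$), and the middle one by part (i) applied to $\rho$, whose conjugates are conjugates of $\sigma$ because $t_{ij}(1)\in\E_n(R)$. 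Adding the three counts gives the stated bound of $24$.

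The computations are routine; the only genuine ideas are (a) the commutativity trick that lets one prescribe $x_1=\bar\sigma_{12}$ while keeping the first row of $\bar\sigma$ orthogonal to $(x_1,\dots,x_{n-1},0)$, and (b) the choice of the conjugating element $t_{ij}(1)$ in part (ii), which is precisely what places $\sigma_{ii}-\sigma_{jj}$ into an off-diagonal slot. I expect the main obstacle to be purely bookkeeping: checking that the permutation conjugation in (i) and the elementary conjugation in (ii) do not increase the number of conjugates, and verifying the matrix identity for $\rho_{ij}$. It is worth emphasising that commutativity is invoked only in step (a); this is exactly the point that breaks down over noncommutative $R$ and forces the more delicate arguments of the later sections.
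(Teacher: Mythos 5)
Your proof is correct and follows essentially the same route as the paper: part (i) is the paper's argument verbatim (apply Proposition~\ref{propkey}(i) with $x_1=\sigma_{12}$, $x_2=-\sigma_{11}$, using commutativity to kill the sum, then transport via Lemma~\ref{lemp}), and part (ii) differs only cosmetically, conjugating by $t_{ij}(1)$ and reading off the $(i,j)$ entry where the paper conjugates by $t_{ji}(-1)$ and reads off the $(j,i)$ entry, with the identical $8+8+8=24$ bookkeeping.
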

\begin{proof}
\begin{enumerate}[(i)]
\item Clearly $\sigma_{11}\sigma_{12}-\sigma_{12}\sigma_{11}=0$. Hence, by Proposition \ref{propkey}(i), $t_{kl}(a\sigma_{12})$ is a product of $8$ $\E_n(R)$-conjugates of $\sigma$ and $\sigma^{-1}$. Assertion (i) now follows from Lemma \ref{lemp}.
\item Clearly the entry of $\sigma^{t_{ji}(-1)}$ at position $(j,i)$ equals $\sigma_{ii}-\sigma_{jj}+\sigma_{ji}-\sigma_{ij}$. Applying (i) to $\sigma^{t_{ji}(-1)}$ we get that $t_{kl}(a(\sigma_{ii}-\sigma_{jj}+\sigma_{ji}-\sigma_{ij}))$ is a product of $8$ $\E_n(R)$-conjugates of $\sigma$ and $\sigma^{-1}$. Applying (i) to $\sigma$ we get that $t_{kl}(a(\sigma_{ij}-\sigma_{ji}))=t_{kl}(a\sigma_{ij})t_{kl}(-a\sigma_{ji})$ is a product of $8+8=16$ $\E_n(R)$-conjugates of $\sigma$ and $\sigma^{-1}$. It follows that $t_{kl}(a(\sigma_{ii}-\sigma_{jj}))=t_{kl}(a(\sigma_{ii}-\sigma_{jj}+\sigma_{ji}-\sigma_{ij}))t_{kl}(a(\sigma_{ij}-\sigma_{ji}))$ is a product of $24$ $\E_n(R)$-conjugates of $\sigma$ and $\sigma^{-1}$.
\end{enumerate}
\end{proof}

\section{RDU over von Neumann regular rings}

Recall that $R$ is called {\it von Neumann regular}, if for any $x\in R$ there is a $y\in R$ such that $xyx=x$.
\begin{theorem}\label{thmNeum}
Suppose that $R$ is von Neumann regular. Let $\sigma\in \GL_n(R)$, $i\neq j$, $k\neq l$ and $a,b,c\in R$. Then 
\begin{enumerate}[(i)]
\itemsep0pt 
\item $t_{kl}(a\sigma_{ij}b)$ is a product of $8$ $\E_n(R)$-conjugates of $\sigma$ and $\sigma^{-1}$ and
\item $t_{kl}(a(c\sigma_{ii}-\sigma_{jj}c)b)$ is a product of $24$ $\E_n(R)$-conjugates of $\sigma$ and $\sigma^{-1}$.
\end{enumerate}
\end{theorem}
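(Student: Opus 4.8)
The substance is in (i); I would derive (ii) from it formally. For (i), the plan is to apply Proposition \ref{propkey}(i) to a suitable element of the orbit $\sigma^{\E_n(R)}$. By Lemma \ref{lemp} I may move the entry $\sigma_{ij}$ to any off-diagonal slot, so it suffices to produce, for some conjugate $\mu=\sigma^{\xi}$ ($\xi\in\E_n(R)$), a vector $(x_1,\dots,x_n)$ with $\sum_{p=1}^{n}\mu_{1p}x_p=0$, $x_n=0$ and $x_1=\sigma_{ij}$: Proposition \ref{propkey}(i) then writes $t_{kl}(a\sigma_{ij}b)=t_{kl}(ax_1b)$ as a product of $8$ $\E_n(R)$-conjugates of $\mu^{\pm1}$, and these are $\E_n(R)$-conjugates of $\sigma^{\pm1}$. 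Since the first row $\mu_{1*}$ of the invertible matrix $\mu$ is right unimodular, its right kernel $\{x:\mu_{1*}x=0\}$ is free with basis the columns $(\mu^{-1})_{*2},\dots,(\mu^{-1})_{*n}$ of $\mu^{-1}$ (because $\mu_{1*}(\mu^{-1})_{*q}=\delta_{1q}$); writing $x=\sum_{q\ge2}(\mu^{-1})_{*q}t_q$, the two side conditions $x_1=\sigma_{ij}$ and $x_n=0$ become the $2\times(n-1)$ inhomogeneous system $\sum_{q\ge2}(\mu^{-1})_{1q}t_q=\sigma_{ij}$, $\sum_{q\ge2}(\mu^{-1})_{nq}t_q=0$.

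Solving this system is the crux, and this is where von Neumann regularity should enter. The $2\times n$ matrix formed by rows $1$ and $n$ of $\mu^{-1}$ is right invertible (a right inverse is given by columns $1$ and $n$ of $\mu$, since the corresponding $2\times2$ block of $\mu^{-1}\mu$ is the identity), so its image is all of $R^2$; deleting the first column leaves a $2\times(n-1)$ matrix $M$ whose image is a finitely generated submodule of $R^2$, hence---over a von Neumann regular ring---a direct summand $R^2=\operatorname{im}M\oplus P$ with $P$ cyclic. Solvability of the system is equivalent to $(\sigma_{ij},0)^{t}\in\operatorname{im}M$, i.e.\ to the vanishing of the projection of $(\sigma_{ij},0)^{t}$ onto $P$. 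Here I would bring in a von Neumann inverse $y$ of $\sigma_{ij}$, so that $\sigma_{ij}y\sigma_{ij}=\sigma_{ij}$ and $1-y\sigma_{ij}$, $1-\sigma_{ij}y$ are the idempotents cutting out the one-sided annihilators of $\sigma_{ij}$, together with the freedom to vary the conjugate $\mu$, in order to force this projection to vanish. I expect exactly this solvability/choice-of-$\mu$ step to be the main obstacle: everything else is a formal application of Proposition \ref{propkey} and Lemma \ref{lemp}, but arranging $(\sigma_{ij},0)^{t}\in\operatorname{im}M$ is the content that replaces the trivial relation $\sigma_{11}\sigma_{12}-\sigma_{12}\sigma_{11}=0$ used in the commutative case.

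Granting (i), statement (ii) follows by the same pattern as the commutative proof, with the noncommutative bookkeeping of $c$. A direct computation gives that the $(j,i)$-entry of the conjugate $\rho:=\sigma^{t_{ji}(c)}$ equals $-(c\sigma_{ii}-\sigma_{jj}c)+(\sigma_{ji}-c\sigma_{ij}c)$. Applying (i) to $\rho$ shows that $t_{kl}\big(a(-(c\sigma_{ii}-\sigma_{jj}c)+\sigma_{ji}-c\sigma_{ij}c)b\big)$ is a product of $8$ conjugates of $\sigma^{\pm1}$. Applying (i) to $\sigma$ twice---once with multipliers $a,b$ and once with multipliers $ac,cb$---and using (R1) shows that $t_{kl}(a(\sigma_{ji}-c\sigma_{ij}c)b)=t_{kl}(a\sigma_{ji}b)\,t_{kl}(-ac\sigma_{ij}cb)$ is a product of $8+8=16$ conjugates. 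Finally, writing $D:=c\sigma_{ii}-\sigma_{jj}c$ and $E:=\sigma_{ji}-c\sigma_{ij}c$, relation (R1) gives
\[ t_{kl}(aDb)=t_{kl}(aEb)\cdot t_{kl}\big(a(-D+E)b\big)^{-1}, \]
so $t_{kl}(a(c\sigma_{ii}-\sigma_{jj}c)b)$ is a product of $16+8=24$ $\E_n(R)$-conjugates of $\sigma$ and $\sigma^{-1}$, as claimed.
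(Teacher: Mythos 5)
Your part (ii) is fine: it is, up to replacing $t_{ji}(-c)$ by $t_{ji}(c)$, exactly the paper's derivation of (ii) from (i), and the count $16+8=24$ is correct. The gap is in part (i), which, as you say yourself, is the substance. Your plan reduces (i) to finding a conjugate $\mu=\sigma^{\xi}$ and a solution $t$ of the system $\sum_{q\ge2}(\mu^{-1})_{1q}t_q=\sigma_{ij}$, $\sum_{q\ge2}(\mu^{-1})_{nq}t_q=0$, and at that point you concede that arranging $(\sigma_{ij},0)^{t}\in\operatorname{im}M$ is ``the main obstacle,'' to be overcome somehow by a von Neumann inverse of $\sigma_{ij}$ and a clever choice of $\mu$. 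No argument is given, and this step is the entire content of the theorem (it is precisely what fails when one tries to copy the commutative proof). Worse, insisting that $\sigma_{ij}$ itself appear as the coordinate $x_1$ of a kernel vector of $\mu_{1*}$ with $x_n=0$ is stronger than necessary: already for $\mu=\sigma$ and $n=3$ it amounts to $\sigma_{11}\sigma_{12}\in\sigma_{12}R$, which fails for suitable invertible matrices over, say, $\Mat_2(k)$, so the burden really does fall on the unexplained choice of $\mu$.

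The paper's proof sidesteps this entirely by feeding the von Neumann inverse into the $y$-slot of Proposition \ref{propkey} rather than into the vector $x$, i.e.\ it uses part (iii) of that proposition, not part (i). By Lemma \ref{lemp} assume $(i,j)=(1,2)$, choose $z$ with $\sigma_{12}z\sigma_{12}=\sigma_{12}$, and set $y:=\sigma_{12}z$, $x_1:=1$, $x_2:=-z\sigma_{11}$, $x_3=\dots=x_n=0$. Then
\[
y\sum_{p=1}^{n}\sigma_{1p}x_p=\sigma_{12}z\bigl(\sigma_{11}-\sigma_{12}z\sigma_{11}\bigr)=\sigma_{12}z\sigma_{11}-\sigma_{12}z\sigma_{12}z\sigma_{11}=0 ,
\]
so the hypothesis of Proposition \ref{propkey} holds with $x_1=1$: the linear combination $\sum_p\sigma_{1p}x_p$ need not vanish, it only needs to be annihilated on the left by $y$, and that weaker demand is exactly what regularity delivers. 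Proposition \ref{propkey}(iii) then makes every $t_{kl}(ayb')$ a product of $8$ $\E_n(R)$-conjugates of $\sigma$ and $\sigma^{-1}$, and since $y\sigma_{12}=\sigma_{12}z\sigma_{12}=\sigma_{12}$, the choice $b'=\sigma_{12}b$ gives $t_{kl}(a\sigma_{12}b)$. This one relation replaces your unsolved $2\times(n-1)$ system; combined with your (correct) treatment of (ii), it completes the theorem.
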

\begin{proof}
\begin{enumerate}[(i)]
\item Choose a $z$ such that $\sigma_{12}z\sigma_{12}=\sigma_{12}$. Then $\sigma_{12}z(\sigma_{11}-\sigma_{12}z\sigma_{11})=0$. By Proposition \ref{propkey}(iii) (applied with $y=\sigma_{12}z$, $x_1=1$, $x_2=-z\sigma_{11}$ and $x_3,\dots,x_n=0$), we get that $t_{kl}(a\sigma_{12}b)$ is a product of $8$ $\E_n(R)$-conjugates of $\sigma$ and $\sigma^{-1}$. Assertion (i) now follows from Lemma \ref{lemp}.
\item Clearly the entry of $\sigma^{t_{ji}(-c)}$ at position $(j,i)$ equals $c\sigma_{ii}-\sigma_{jj}c+\sigma_{ji}-c\sigma_{ij}c$. Applying (i) to $\sigma^{t_{ji}(-c)}$ we get that $t_{kl}(a(c\sigma_{ii}-\sigma_{jj}c+\sigma_{ji}-c\sigma_{ij}c)b)$ is a product of $8$ $\E_n(R)$-conjugates of $\sigma$ and $\sigma^{-1}$. Applying (i) to $\sigma$ we get that $t_{kl}(a(c\sigma_{ij}c-\sigma_{ji})b)=t_{kl}(ac\sigma_{ij}cb)t_{kl}(-a\sigma_{ji}b)$ is a product of $8+8=16$ $\E_n(R)$-conjugates of $\sigma$ and $\sigma^{-1}$. It follows that $t_{kl}(a(c\sigma_{ii}-\sigma_{jj}c)b)=t_{kl}(a(c\sigma_{ii}-\sigma_{jj}c+\sigma_{ji}-c\sigma_{ij}c)b)t_{kl}(a(c\sigma_{ij}c-\sigma_{ji})b)$ is a product of $24$ $\E_n(R)$-conjugates of $\sigma$ and $\sigma^{-1}$.
\end{enumerate}
\end{proof}

\section{RDU over Banach algebras}
Recall that a {\it Banach algebra} is an algebra $R$ over the real or the complex numbers that at the same time is also a {\it Banach space}, i.e. a normed vector space that is complete with respect to the metric induced by the norm. The norm is required to satisfy $\|x\,y\|\ \leq \|x\|\,\|y\|$ for any $x,y\in R$.

It follows from \cite[Chapter VII, Lemma 2.1]{conway_book} that any Banach algebra $R$ has the property (1) below where $R^*$ denotes the set of all right invertible elements of $R$.
\begin{equation}
\text{For any }x,z\in R\text{ there is a }y\in R^*\text{ such that } 1+xyz\in R^*.
\end{equation}

\begin{theorem}\label{thmBan}
Suppose that $R$ is a ring satisfying (1) (which is true e.g. if $R$ is a Banach algebra). Let $\sigma\in \GL_n(R)$, $i\neq j$, $k\neq l$ and $a,b,c\in R$. Then 
\begin{enumerate}[(i)]
\itemsep0pt 
\item $t_{kl}(a\sigma_{ij}b)$ is a product of $160$ $\E_n(R)$-conjugates of $\sigma$ and $\sigma^{-1}$ and
\item $t_{kl}(a(c\sigma_{ii}-\sigma_{jj}c)b)$ is a product of $480$ $\E_n(R)$-conjugates of $\sigma$ and $\sigma^{-1}$.
\end{enumerate}
\end{theorem}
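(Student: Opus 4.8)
The plan is to deduce (ii) from (i) exactly as in the proofs of Theorems \ref{thmcomm} and \ref{thmNeum}. Conjugating $\sigma$ by $t_{ji}(-c)$ places $c\sigma_{ii}-\sigma_{jj}c+\sigma_{ji}-c\sigma_{ij}c$ in the $(j,i)$-entry, so one application of (i) to $\sigma^{t_{ji}(-c)}$ handles $t_{kl}(a(c\sigma_{ii}-\sigma_{jj}c+\sigma_{ji}-c\sigma_{ij}c)b)$ in $160$ conjugates, while two applications of (i) to $\sigma$ handle $t_{kl}(a(c\sigma_{ij}c-\sigma_{ji})b)=t_{kl}(ac\sigma_{ij}cb)\,t_{kl}(-a\sigma_{ji}b)$ in $160+160$ conjugates; multiplying gives $480$. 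Everything therefore reduces to (i), and by Lemma \ref{lemp} it suffices to write a single entry, say $t_{kl}(a\sigma_{12}b)$, as a product of $160$ conjugates.

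The obstruction relative to the von Neumann regular case is that property~(1) supplies neither a quasi-inverse of $\sigma_{12}$ nor stable rank $1$ (Banach algebras may have arbitrarily large stable rank), so one cannot make a diagonal entry of a conjugate of $\sigma$ right invertible. Instead I would feed a \emph{perturbation of the identity} into Corollary \ref{corkeyA}. For $y\in R$ set
\[\rho_y:=\sigma^{t_{2n}(y)}\sigma^{-1}=t_{2n}(-y)\,\sigma\,t_{2n}(y)\,\sigma^{-1}\in\sigma^{\E_n(R)}.\]
Since $\sigma\, t_{2n}(y)\,\sigma^{-1}=e+\sigma_{*2}\,y\,\sigma'_{n*}$ and left multiplication by $t_{2n}(-y)$ alters only the second row, the first row of $\rho_y$ is $e_{1*}+\sigma_{12}\,y\,\sigma'_{n*}$; in particular $(\rho_y)_{11}=1+\sigma_{12}\,y\,\sigma'_{n1}$ and $(\rho_y)_{1j}=\sigma_{12}\,y\,\sigma'_{nj}$ for $j\geq 2$. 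Applying property~(1) with $x=\sigma_{12}$ and $z=\sigma'_{n1}$ produces a right invertible $y$ making $(\rho_y)_{11}$ right invertible, so Corollary \ref{corkeyA} is available for $\rho_y$.

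First I would apply Corollary \ref{corkeyA} to $\rho_y$ and, after conjugating by generalised permutation matrices fixing the index $1$ (Lemma \ref{lemp}), to the matrices whose $(1,n)$-entry is $(\rho_y)_{1j}$; this yields each $t_{kl}(a\,\sigma_{12}\,y\,\sigma'_{nj}\,b)$ as a product of $8$ conjugates of $\rho_y^{\pm1}$, hence of $8\cdot 2=16$ conjugates of $\sigma$ and $\sigma^{-1}$ (every conjugate of $\rho_y$ splits as a conjugate of $\sigma$ times a conjugate of $\sigma^{-1}$). Then I would assemble $\sigma_{12}$: exploiting $\sum_{j}\sigma'_{nj}\sigma_{jn}=1$, replacing the right argument $b$ by $\sigma_{jn}b$ and multiplying by (R1) produces $t_{kl}(a\,\sigma_{12}\,y\,b)$, after which the right invertible factor $y$ is absorbed into the right argument to give $t_{kl}(a\,\sigma_{12}\,b)$. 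Lemma \ref{lemp} then upgrades $\sigma_{12}$ to an arbitrary $\sigma_{ij}$.

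The main obstacle is precisely this final assembly. Because property~(1) only controls the single perturbation $1+\sigma_{12}y\sigma'_{n1}$ of the identity, the entries delivered by Corollary \ref{corkeyA} carry the trailing inverse-factors $y\sigma'_{nj}$, which must be stripped to recover $\sigma_{12}$ on the nose; moreover the $j=1$ contribution is entangled with the ``$+1$'' of $(\rho_y)_{11}$ and so is not directly accessible as an off-diagonal transvection. Doing the stripping with a bound \emph{independent of $n$} — rather than one growing with the $n$ summands of $\sum_j\sigma'_{nj}\sigma_{jn}=1$ — is the delicate point, and it is this bookkeeping, together with the factor $2$ from $\rho_y$ being a product of two conjugates, that inflates the constant from $8$ to $160$. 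I expect the $n$-independence to be secured by treating the diagonal $(1,1)$-contribution on its own and by re-using property~(1) a bounded number of times, in the spirit of Corollary \ref{corkeyD}, rather than by a telescoping sum over all columns.
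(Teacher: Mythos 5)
Your reduction of (ii) to (i) is exactly the paper's, and your setup for (i) is sound as far as it goes: $\rho_y=\sigma^{t_{2n}(y)}\sigma^{-1}$ is a product of one $\E_n(R)$-conjugate of $\sigma$ and one of $\sigma^{-1}$, its first row is indeed $e_{1*}+\sigma_{12}y\sigma'_{n*}$, property (1) supplies a right invertible $y$ making $(\rho_y)_{11}=1+\sigma_{12}y\sigma'_{n1}$ right invertible, and Corollary \ref{corkeyA} together with permutations fixing the index $1$ (Lemma \ref{lemp}) then yields each $t_{kl}(a\sigma_{12}y\sigma'_{nj}b)$, $j\geq 2$, as a product of $16$ conjugates. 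This closely parallels the paper's Step 1, which instead uses the commutator $\tau=[\sigma,t_{1n}(x)]$ with $\tau_{11}=1+\sigma_{11}x\sigma'_{n1}$.

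However, the ``final assembly'' you defer is not bookkeeping -- it is the substance of the theorem, and your proposal does not contain the idea needed to carry it out. Your route requires $\sigma_{12}y=\sum_j\sigma_{12}y\sigma'_{nj}\sigma_{jn}$, but (a) the $j=1$ summand sits inside the unit $(\rho_y)_{11}$ and your method produces no transvection containing it, and (b) even granting all summands, the count grows like $16n$, while the theorem claims the fixed bound $160$ for every $n$; you concede both points and only ``expect'' a repair. Any argument reconstituting $\sigma_{12}$ from the fragments $\sigma_{12}y\sigma'_{nj}$ by the column sum is intrinsically $n$-dependent, so this is a dead end rather than a detail. The paper avoids summing over columns altogether: it targets $\sigma_{1n}$ rather than $\sigma_{12}$, observes that the extractable entry of $\tau$ is $\tau_{1n}=(\sigma_{11}\alpha-1)x$ with $\alpha=x\sigma'_{nn}x^{-1}-x\sigma'_{n1}$ (so, absorbing the right invertible $x$, all transvections $t_{k'l'}(a'(\sigma_{11}\alpha-1)b')$ cost $16$ conjugates), and then splits $\sigma_{1n}=(1-\sigma_{11}\alpha)\sigma_{1n}+\sigma_{11}\alpha\sigma_{1n}$. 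The first piece is of the controlled form; the second is reached by applying property (1) a second time, to $\zeta=\sigma t_{1n}(-\alpha\sigma_{1n})$, followed by a three-move simultaneous reduction (Lemma \ref{lemredux}), the key trick being that the correction transvection $t_{12}(-\xi_{11}^{-1}\xi_{12})$ is again of the controlled form because $\xi_{12}$ carries the factor $(1-\sigma_{11}\alpha)$; this gives $t_{21}(a''\alpha\sigma_{1n}b'')$ in $8\cdot 18=144$ conjugates and the total $144+16=160$. To salvage your approach you would need an analogous splitting of $\sigma_{12}$ exploiting the unit $1+\sigma_{12}y\sigma'_{n1}$, and that is precisely what is missing.
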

\begin{proof}
\begin{enumerate}[(i)]
\item {\bf Step 1} Let $x\in R$ and set $\tau:=[\sigma,t_{1n}(x)]$. Then $\tau_{11}=1+\sigma_{11}x\sigma'_{n1}\in R^*$ for an appropriate $x\in R^*$. Let $x^{-1}$ denote a right inverse of $x$. It follows from Corollary \ref{corkeyA} that for any $k'\neq l'$ and $a',b'\in R$, the elementary transvection 
\[t_{k'l'}(a'\tau_{1n}b')=t_{k'l'}(a'(\sigma_{11}x\sigma'_{nn}-(1+\sigma_{11}x\sigma'_{n1})x)b')=t_{k'l'}(a'(\sigma_{11}\alpha-1)xb'),\] 
where $\alpha=x\sigma'_{nn}x^{-1}-x\sigma'_{n1}$, is a product of $8$ $\E_n(R)$-conjugates of $\tau$ and $\tau^{-1}$. Hence
\begin{equation}
t_{k'l'}(a'(\sigma_{11}\alpha-1)b')\text{ is a product of }16~\E_n(R)\text{-conjugates of }\sigma \text{ and }\sigma^{-1}.
\end{equation}
{\bf Step 2} Set $\zeta:=\sigma t_{1n}(-\alpha\sigma_{1n})$ where $\alpha$ is defined as in Step 1. Clearly 
\[(t_{1n}(\alpha\sigma_{1n}),\zeta)\xrightarrow{t_{n2}(y)}(t_{12}(-\alpha\sigma_{1n}y),\xi)\]
for any $y\in R$, where $\xi=[t_{n2}(y),\zeta]$. It follows from Lemma \ref{lemredux} that 
\begin{equation}
t_{12}(-\alpha\sigma_{1n}y)\xi\text{ is a product of }2~\E_n(R)\text{-conjugates of }\sigma \text{ and }\sigma^{-1}.
\end{equation}
Choose a $y\in R^*$ such that $\xi_{11}=1-\zeta_{1n}y\zeta'_{21}\in R^*$. Let $\xi_{11}^{-1}$ denote a right inverse of $\xi_{11}$. Then $\rho:=\xi t_{12}(-\xi_{11}^{-1}\xi_{12})$ has the property that $\rho_{12}=0$. Clearly $\xi_{12}=-\zeta_{1n}y\zeta'_{22}=-(1-\sigma_{11}\alpha)\sigma_{1n}y\zeta'_{22}$. Hence, by (2), 
\begin{equation}
t_{12}(-\xi_{11}^{-1}\xi_{12})\text{ is a product of }16~\E_n(R)\text{-conjugates of }\sigma \text{ and }\sigma^{-1}.
\end{equation}
It follows from (3) and (4) that 
\begin{equation}
t_{12}(-\alpha\sigma_{1n}y)\rho\text{ is a product of }18~\E_n(R)\text{-conjugates of }\sigma \text{ and }\sigma^{-1}.
\end{equation}
{\bf Step 3} Let $y^{-1}$ denote a right inverse of $y$. One checks easily that for any $a'',b''\in R$ we have 
\[(t_{12}(-\alpha\sigma_{1n}y),\rho)\xrightarrow{t_{23}(y^{-1}b''),t_{21}(a''),t_{31}(-1)}(t_{21}(a''\alpha\sigma_{1n}b''), e)\]
(recall from Step 2 that $\rho_{12}=0$). It follows from Lemma \ref{lemredux} that $t_{21}(a''\alpha\sigma_{1n}b'')$ is a product of $8$ $\E_n(R)$-conjugates of $t_{12}(-\alpha\sigma_{1n}y)\rho$ and its inverse. Hence, by (5), 
\begin{equation}
t_{21}(a''\alpha\sigma_{1n}b'')\text{ is a product of }8\cdot 18=144~\E_n(R)\text{-conjugates of }\sigma \text{ and }\sigma^{-1}.
\end{equation}
It follows from (2) and (6) that $t_{21}(a\sigma_{1n}b)=t_{21}(a(1-\sigma_{11}\alpha)\sigma_{1n}b)t_{21}(a\sigma_{11}\alpha\sigma_{1n}b)$ is a product of $144+16=160$ $\E_n(R)$-conjugates of $\sigma$ and $\sigma^{-1}$. Assertion (i) now follows from Lemma \ref{lemp}.
\item See the proof of Theorem \ref{thmNeum}.
\end{enumerate}
\end{proof}

\section{RDU in the stable range}

Recall that a row vector $u\in {}^m\!R$ is called {\it unimodular} if there is a column vector $v\in R^m$ such that $uv=1$. The {\it stable rank $\sr(R)$} is the least $m\in\N$ such that for any $p\geq m$ and unimodular row $(u_1,\dots,u_{p+1})\in {}^{p+1}\!R$ there are elements $x_1,\dots,x_p\in R$ such that $(u_1+u_{p+1}x_1,\dots,u_p+u_{p+1}x_p)\in {}^p\!R$ is unimodular (if no such $m$ exists, then $\sr(R)=\infty$).

Define $E^*_n(R):=\langle t_{kl}(x)\mid x\in R,k\neq l, k\neq 1, l\neq n\rangle$.

\begin{lemma}\label{lemsr}
Suppose that $\sr(R)<n$ and let $\sigma\in \GL_n(R)$. Then there is a $\rho\in E^*_n(R)$ such that the row vector $((\sigma^{\rho})_{11},\dots,(\sigma^{\rho})_{1,\sr(R)})$ is unimodular.
\end{lemma}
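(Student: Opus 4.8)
The plan is to use the stable range condition to shorten the first row of a suitable conjugate of $\sigma$ step by step, keeping exactly $\sr(R)$ entries at the end, and to record the column operations this requires as conjugation by an element of $E^*_n(R)$. The starting point is the observation that conjugation by $E^*_n(R)$ acts on the first row of $\sigma$ by pure right multiplication. Indeed, every generator $t_{kl}(x)$ of $E^*_n(R)$ has $k\neq 1$, so its first row is $e_{1*}$; since the matrices with first row $e_{1*}$ form a subgroup of $\GL_n(R)$, we get $\rho_{1*}=e_{1*}=(\rho^{-1})_{1*}$ for every $\rho\in E^*_n(R)$, and hence $(\sigma^\rho)_{1*}=(\rho^{-1})_{1*}\,\sigma\rho=\sigma_{1*}\rho$.

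Next I would record how right multiplication acts. A direct computation shows that right multiplication of $\sigma_{1*}$ by a generator $t_{kl}(x)$ adds $\sigma_{1k}x$ to the $l$-th entry and fixes the others. Thus the two index restrictions defining $E^*_n(R)$ translate into: column $1$ may never serve as the source $k$ of such a column operation, and column $n$ may never be the target $l$. Since $\sigma\in\GL_n(R)$ gives $\sigma_{1*}\sigma'_{*1}=1$, the first row $(\sigma_{11},\dots,\sigma_{1n})$ is a unimodular row of length $n$.

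Writing $s:=\sr(R)$ and recalling $s<n$, I would shorten this row in $n-s$ steps. At the step passing from length $m+1$ to length $m$, for $m$ running from $n-1$ down to $s$, the first $m+1$ entries form a unimodular row $(u_1,\dots,u_{m+1})$; since $m\geq s=\sr(R)$, the stable range condition yields $x_1,\dots,x_m\in R$ with $(u_1+u_{m+1}x_1,\dots,u_m+u_{m+1}x_m)$ unimodular. I realize each replacement $u_l\mapsto u_l+u_{m+1}x_l$ via right multiplication by $t_{m+1,l}(x_l)$: here the source column $m+1$ is at least $2$ (as $m\geq s\geq 1$), the target column $l\leq m\leq n-1$ satisfies $l\neq n$, and $m+1\neq l$, so each $t_{m+1,l}(x_l)$ is a generator of $E^*_n(R)$. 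As the targets $1,\dots,m$ are disjoint from the source $m+1$, that column stays fixed during the step, so all $m$ operations use the same pivot value and do not interfere; afterwards columns $1,\dots,m$ carry the new unimodular row while column $m+1$ and the later columns are unchanged, preserving the inductive hypothesis for the next step.

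Taking $\rho$ to be the product of all the transvections applied, in order, we obtain $\rho\in E^*_n(R)$ and, by the first paragraph, $(\sigma^\rho)_{1*}=\sigma_{1*}\rho$ is the row produced by the reductions; in particular $((\sigma^\rho)_{11},\dots,(\sigma^\rho)_{1s})$ is unimodular, as required. The only genuinely delicate point is the index bookkeeping of the previous paragraph, namely checking that the constraints $k\neq 1$ and $l\neq n$ are compatible with the stable range reductions at every stage, i.e.\ that we never need column $1$ as a pivot nor column $n$ as a target. This works out precisely because the reduction folds the columns $n,n-1,\dots,s+1$ into the earlier ones while preserving columns $1,\dots,s$, so the pivot always lies in $\{s+1,\dots,n\}$ and the target always lies in $\{1,\dots,n-1\}$.
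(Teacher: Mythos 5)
Your proof is correct and takes essentially the same route as the paper's: iterated application of the stable range condition to the first row, realized by right multiplication by transvections $t_{m+1,l}(x_l)$ whose sources lie in $\{\sr(R)+1,\dots,n\}$ and targets in $\{1,\dots,n-1\}$ (hence all in $E^*_n(R)$), combined with the observation that conjugation by $E^*_n(R)$ acts on the first row as right multiplication. The paper compresses exactly this into one displayed formula for $\rho$; your write-up merely makes the induction and the index bookkeeping explicit.
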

\begin{proof}
Since $\sr(R)<n$, there is a $\rho$ of the form
\[\rho=(\prod\limits_{j=1}^{n-1}t_{nj}(*))(\prod\limits_{j=1}^{n-2}t_{n-1,j}(*))\dots (\prod\limits_{j=1}^{\sr(R)}t_{\sr(R)+1,j}(*))\in E^*_n(R)\]
such that $((\sigma\rho)_{11},\dots,(\sigma\rho)_{1,\sr(R)})$ is unimodular. Clearly $((\sigma^{\rho})_{11},\dots,(\sigma^{\rho})_{1,\sr(R)})=((\sigma\rho)_{11},\dots,(\sigma\rho)_{1,\sr(R)})$ since $\rho\in E^*_n(R)$.
\end{proof}

\begin{theorem}\label{thmsr=1}
Suppose that $\sr(R)=1$. Let $\sigma\in \GL_n(R)$, $i\neq j$, $k\neq l$ and $a,b,c\in R$. Then 
\begin{enumerate}[(i)]
\itemsep0pt 
\item $t_{kl}(a\sigma_{ij}b)$ is a product of $8$ $\E_n(R)$-conjugates of $\sigma$ and $\sigma^{-1}$ and
\item $t_{kl}(a(c\sigma_{ii}-\sigma_{jj}c)b)$ is a product of $24$ $\E_n(R)$-conjugates of $\sigma$ and $\sigma^{-1}$.
\end{enumerate}
\end{theorem}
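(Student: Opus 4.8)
For part (ii), I would import the argument verbatim from the proof of Theorem~\ref{thmNeum}(ii), since nothing there used von Neumann regularity beyond part (i). Concretely, the entry of $\sigma^{t_{ji}(-c)}$ at position $(j,i)$ is $c\sigma_{ii}-\sigma_{jj}c+\sigma_{ji}-c\sigma_{ij}c$, so (i) applied to $\sigma^{t_{ji}(-c)}$ writes $t_{kl}(a(c\sigma_{ii}-\sigma_{jj}c+\sigma_{ji}-c\sigma_{ij}c)b)$ as a product of $8$ conjugates of $\sigma$ (because $t_{ji}(-c)\in\E_n(R)$), while two applications of (i) to $\sigma$ handle $t_{kl}(ac\sigma_{ij}cb)$ and $t_{kl}(-a\sigma_{ji}b)$, giving $t_{kl}(a(c\sigma_{ij}c-\sigma_{ji})b)$ in $16$ conjugates via (R1). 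Multiplying the two transvections yields $t_{kl}(a(c\sigma_{ii}-\sigma_{jj}c)b)$ in $8+16=24$ conjugates. Hence the whole theorem reduces to (i), and that is where I would concentrate.

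For (i) my plan is to reduce to a single distinguished entry and then feed the stable range condition into Corollary~\ref{corkeyA}. By Lemma~\ref{lemp} it is enough to show that for an \emph{arbitrary} $\sigma\in\GL_n(R)$ the transvection $t_{kl}(a\sigma_{1n}b)$ is a product of $8$ $\E_n(R)$-conjugates of $\sigma$ and $\sigma^{-1}$ (for all $k\neq l$, $a,b$): a general entry $\sigma_{ij}$ with $i\neq j$ is realised as $(\sigma^\tau)_{1n}$ for some $\tau\in\Perm_n(R)\subseteq\E_n(R)$, and conjugates of $\sigma^\tau$ are conjugates of $\sigma$. To establish the $\sigma_{1n}$ case I would invoke Lemma~\ref{lemsr}: since $\sr(R)=1<n$, there is a $\rho\in E^*_n(R)$ for which the length-one row $((\sigma^\rho)_{11})$ is unimodular, i.e.\ $(\sigma^\rho)_{11}$ is right invertible. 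That is exactly the hypothesis of Corollary~\ref{corkeyA}, which applied to $\sigma^\rho$ gives $t_{kl}(a(\sigma^\rho)_{1n}b)$ as a product of $8$ $\E_n(R)$-conjugates of $\sigma^\rho$, hence of $\sigma$ because $\rho\in\E_n(R)$.

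The one nontrivial point — and the step I expect to be the main obstacle — is that replacing $\sigma$ by $\sigma^\rho$ must not alter the entry I am targeting, that is, $(\sigma^\rho)_{1n}=\sigma_{1n}$. This is precisely what the definition of $E^*_n(R)$ is engineered to guarantee: its generators $t_{kl}(x)$ all satisfy $k\neq 1$ and $l\neq n$. Writing $\sigma^\rho=\rho^{-1}\sigma\rho$, the factor $\rho^{-1}\in E^*_n(R)$ has first row $e_{1*}$ (no generator adds into row $1$), so $(\sigma^\rho)_{1*}=(\sigma\rho)_{1*}$; and right multiplication by $\rho$ modifies only columns $l\neq n$, leaving column $n$ fixed, so $(\sigma\rho)_{1n}=\sigma_{1n}$. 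Combining these gives $(\sigma^\rho)_{1n}=\sigma_{1n}$, after which the application of Corollary~\ref{corkeyA} above delivers $t_{kl}(a\sigma_{1n}b)$ in $8$ conjugates; Lemma~\ref{lemp} then upgrades this to arbitrary $\sigma_{ij}$ and completes (i).
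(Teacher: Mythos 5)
Your proposal is correct and follows essentially the same route as the paper: Lemma~\ref{lemsr} with $\sr(R)=1$ to make $(\sigma^\rho)_{11}$ right invertible for some $\rho\in E^*_n(R)$, Corollary~\ref{corkeyA} applied to $\sigma^\rho$, the observation that $(\sigma^\rho)_{1n}=\sigma_{1n}$ (which the paper states without proof and you justify correctly from the constraints $k\neq 1$, $l\neq n$ on the generators of $E^*_n(R)$), Lemma~\ref{lemp} for general $(i,j)$, and the Theorem~\ref{thmNeum}(ii) argument for part (ii).
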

\begin{proof}
\begin{enumerate}[(i)]
\item By Lemma \ref{lemsr} there there is a $\rho\in E^*_n(R)$ such that $\hat\sigma_{11}$ is right invertible where $\hat\sigma=\sigma^{\rho}$. It follows from Corollary \ref{corkeyA} that $t_{kl}(a\hat\sigma_{1n}b)$ is a product of $8$ $\E_n(R)$-conjugates of $\hat\sigma$ and $\hat\sigma^{-1}$. Clearly $\hat\sigma_{1n}=\sigma_{1n}$ since $\rho\in E^*_n(R)$. Moreover, any $\E_n(R)$-conjugate of $\hat\sigma$ or $\hat\sigma^{-1}$ is an $\E_n(R)$-conjugate of $\sigma$ or $\sigma^{-1}$. Hence $t_{kl}(a\sigma_{1n}b)$ is a product of $8$ $\E_n(R)$-conjugates of $\sigma$ and $\sigma^{-1}$. Assertion (i) now follows from Lemma \ref{lemp}.
\item See the proof of Theorem \ref{thmNeum}.
\end{enumerate}
\end{proof}

\begin{theorem}
Suppose that $1<\sr(R)<n$. Let $\sigma\in \GL_n(R)$, $i\neq j$, $k\neq l$ and $a,b,c\in R$. Then 
\begin{enumerate}[(i)]
\itemsep0pt 
\item $t_{kl}(a\sigma_{ij}b)$ is a product of $16$ $\E_n(R)$-conjugates of $\sigma$ and $\sigma^{-1}$ and
\item $t_{kl}(a(c\sigma_{ii}-\sigma_{jj}c)b)$ is a product of $48$ $\E_n(R)$-conjugates of $\sigma$ and $\sigma^{-1}$.
\end{enumerate}
\end{theorem}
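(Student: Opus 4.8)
The plan is to reduce, via Lemma \ref{lemp}, to proving the single statement that $t_{kl}(a\sigma_{1n}b)$ is a product of $16$ $\E_n(R)$-conjugates of $\sigma$ and $\sigma^{-1}$ for all $k\neq l$ and $a,b\in R$, exactly as in the proof of Theorem \ref{thmsr=1}(i). The entry $\sigma_{1n}$ is the natural target because it is fixed under conjugation by any $\rho\in E^*_n(R)$: such a $\rho$ is a product of transvections $t_{k'l'}(\ast)$ with $k'\neq 1$ and $l'\neq n$, so $\rho^{-1}$ leaves the first row of a matrix unchanged and $\rho$ leaves its last column unchanged, whence $(\sigma^{\rho})_{1n}=\sigma_{1n}$ and likewise $((\sigma^{-1})^{\rho})_{1n}=\sigma'_{1n}$.

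First I would record an inverse-entry version: for every $M\in\GL_n(R)$, the transvection $t_{kl}(aM'_{1n}b)$ is a product of $16$ $\E_n(R)$-conjugates of $M$ and $M^{-1}$. To see this, apply Lemma \ref{lemsr} to $M$ to obtain $\rho\in E^*_n(R)$ such that, writing $\hat M:=M^{\rho}$, the row $(\hat M_{11},\dots,\hat M_{1,\sr(R)})$ is unimodular. Since $\sr(R)<n$, i.e.\ $\sr(R)\leq n-1$, the longer row $(\hat M_{11},\dots,\hat M_{1,n-1})$ is again unimodular, so there are $v_1,\dots,v_{n-1}\in R$ with $\sum_{p=1}^{n-1}\hat M_{1p}v_p=1$. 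Setting $x_p:=-v_p\hat M_{1n}$ gives $\sum_{p=1}^{n-1}\hat M_{1p}x_p+\hat M_{1n}=0$, which is precisely the hypothesis of Corollary \ref{corkeyD} for $\hat M$. Taking $j=n$ there, $t_{kl}(a\hat M'_{1n}b)$ is a product of $16$ $\E_n(R)$-conjugates of $\hat M$ and $\hat M^{-1}$; but $\hat M'_{1n}=M'_{1n}$ by the observation above, and every conjugate of $\hat M^{\pm1}=(M^{\rho})^{\pm1}$ is a conjugate of $M^{\pm1}$, giving the claim.

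Applying this with $M=\sigma^{-1}$ turns the inverse entry into an entry of $\sigma$: since $(\sigma^{-1})'_{1n}=\sigma_{1n}$, we obtain that $t_{kl}(a\sigma_{1n}b)$ is a product of $16$ $\E_n(R)$-conjugates of $\sigma$ and $\sigma^{-1}$, and (i) follows from Lemma \ref{lemp}. For (ii) I would copy the commutator computation from the proof of Theorem \ref{thmNeum}(ii) verbatim: the $(j,i)$ entry of $\sigma^{t_{ji}(-c)}$ equals $c\sigma_{ii}-\sigma_{jj}c+\sigma_{ji}-c\sigma_{ij}c$, so applying (i) once to $\sigma^{t_{ji}(-c)}$ and twice to $\sigma$ expresses $t_{kl}(a(c\sigma_{ii}-\sigma_{jj}c)b)$ as a product of $16+16+16=48$ conjugates.

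The main obstacle is conceptual rather than computational, and it is concentrated in the first reduction. For $\sr(R)=1$ one can shorten the unimodular first row all the way down to a single right-invertible entry and invoke Corollary \ref{corkeyA}, which costs only $8$ conjugates. When $\sr(R)>1$ this is impossible: the stable range condition only lets one shorten a unimodular row down to length $\sr(R)$, never below it, so no single entry can be forced to be right invertible. The fix is to use the full length-$(n-1)$ relation coming from unimodularity together with Corollary \ref{corkeyD} in place of Corollary \ref{corkeyA}, and this is exactly what doubles the bound from $8$ to $16$. The only remaining wrinkle is that Corollary \ref{corkeyD} produces transvections built from entries of the inverse, which is why the argument must be run on $\sigma^{-1}$ (equivalently, dualised at the end) in order to land on entries of $\sigma$ itself.
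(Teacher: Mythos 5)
Your proof is correct and takes essentially the same route as the paper's: Lemma \ref{lemsr} to make the initial segment of the first row of $\sigma^{\rho}$ unimodular, the padded relation $\sum_{p=1}^{n-1}\hat\sigma_{1p}x_p+\hat\sigma_{1n}=0$ feeding into Corollary \ref{corkeyD} with $j=n$, invariance of the $(1,n)$ entries of $\hat\sigma$ and $\hat\sigma^{-1}$ under conjugation by $E^*_n(R)$, the swap of $\sigma$ and $\sigma^{-1}$ followed by Lemma \ref{lemp}, and the same commutator computation for (ii). The only cosmetic difference is that you package the dualization as a general ``inverse-entry'' statement applied to $M=\sigma^{-1}$, whereas the paper runs the argument on $\sigma$ itself and swaps the roles of $\sigma$ and $\sigma^{-1}$ at the end.
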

\begin{proof}
\begin{enumerate}[(i)]
\item By Lemma \ref{lemsr} there there is a $\rho\in E^*_n(R)$ such that $(\hat\sigma_{11},\dots,\hat\sigma_{1,\sr(R)})$ is unimodular where $\hat\sigma=\sigma^{\rho}$.  Clearly there are $x_1,\dots,x_{\sr(R)}\in R$ such that $\sum\limits_{p=1}^{\sr(R)}\hat\sigma_{1p}x_p+\hat\sigma_{1n}=0$. It follows from Corollary \ref{corkeyD} that $t_{kl}(a\hat\sigma'_{1n}b)$ is a product of $16$ $\E_n(R)$-conjugates of $\hat\sigma$ and $\hat\sigma^{-1}$. Clearly $\hat\sigma'_{1n}=\sigma'_{1n}$ since $\rho\in E^*_n(R)$. Moreover, any $\E_n(R)$-conjugate of $\hat\sigma$ or $\hat\sigma^{-1}$ is an $\E_n(R)$-conjugate of $\sigma$ or $\sigma^{-1}$. Hence $t_{kl}(a\sigma'_{1n}b)$ is a product of $16$ $\E_n(R)$-conjugates of $\sigma$ and $\sigma^{-1}$. Assertion (i) now follows from Lemma \ref{lemp} (after swapping the roles of $\sigma$ and $\sigma^{-1}$). 
\item See the proof of Theorem \ref{thmNeum}.
\end{enumerate}
\end{proof}

\section{RDU over rings with Euclidean algorithm}

We call $R$ a {\it ring with $m$-term Euclidean algorithm} if for any row vector $v\in {}^m\!R$ there is a $\tau\in \E_m(R)$ such that $v\tau$ has a zero entry. Note that the rings with $2$-term Euclidean algorithm are precisely the right quasi-Euclidean rings defined in \cite{alahmadi} (follows from \cite[Theorem 11]{alahmadi}).

If $\tau\in \GL_m(R)$ for some $1\leq m<n$, then we identify $\tau$ with its image in $\GL_n(R)$ under the embedding 
\begin{align*}
\GL_m(R)&\hookrightarrow \GL_n(R)\\
\sigma&\mapsto\begin{pmatrix}e_{(n-m)\times(n-m)}&0\\0&\sigma\end{pmatrix}
\end{align*}

\begin{lemma}\label{lemeucl}
Let $\sigma\in \GL_n(R)$ and suppose $(\sigma\tau)_{1n}=0$ for some $\tau\in E_m(R)$ where $1\leq m<n$. Then for any $k\neq l$ and $a,b\in R$,
\begin{enumerate}[(i)]
\itemsep0pt 
\item $t_{kl}(a\sigma'_{1j}b)$ is a product of $8$ $\E_n(R)$-conjugates of $\sigma$ and $\sigma^{-1}$ if $j\in \{2,\dots,n-m\}$ and
\item $t_{kl}(a\sigma'_{1j}b)$ is a product of $8m$ $\E_n(R)$-conjugates of $\sigma$ and $\sigma^{-1}$ if $j\in \{n-m+1,\dots,n\}$.
\end{enumerate}
\end{lemma}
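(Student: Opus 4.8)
The plan is to replace the hypothesis, which only provides a zero entry of the \emph{product} $\sigma\tau$, by a statement about an honest $\E_n(R)$-conjugate of $\sigma$. Under the embedding convention, $\tau\in\E_m(R)$ acts as the identity on the coordinates $1,\dots,n-m$ and mixes only the coordinates $n-m+1,\dots,n$; in particular the first rows of both $\tau$ and $\tau^{-1}$ equal $e_{1*}$. I would set $\hat\sigma:=\sigma^{\tau}=\tau^{-1}\sigma\tau$. Since the first row of $\tau^{-1}$ is $e_{1*}$, a one-line computation gives $\hat\sigma_{1n}=(\tau^{-1}\sigma\tau)_{1n}=(\sigma\tau)_{1n}=0$, so $\hat\sigma$ satisfies the hypothesis of Corollary \ref{corkeyB}. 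The whole point of passing from $\sigma\tau$ to $\hat\sigma$ is that $\hat\sigma$ is a genuine $\E_n(R)$-conjugate of $\sigma$ (as $\tau\in\E_m(R)\subseteq\E_n(R)$), so every $\E_n(R)$-conjugate of $\hat\sigma$ or $\hat\sigma^{-1}$ is again an $\E_n(R)$-conjugate of $\sigma$ or $\sigma^{-1}$.

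Next I would compare the first rows of $\hat\sigma^{-1}$ and $\sigma^{-1}$. From $\hat\sigma^{-1}=\tau^{-1}\sigma^{-1}\tau$ and the fact that the first row of $\tau^{-1}$ is $e_{1*}$ one gets the row identity $\hat\sigma'_{1*}=\sigma'_{1*}\tau$, equivalently $\sigma'_{1*}=\hat\sigma'_{1*}\tau^{-1}$. Because the first $n-m$ columns of $\tau$ and of $\tau^{-1}$ are standard basis columns, this yields $\hat\sigma'_{1j}=\sigma'_{1j}$ for $j\in\{2,\dots,n-m\}$, while for $j\in\{n-m+1,\dots,n\}$ it yields
\[\sigma'_{1j}=\sum_{p=n-m+1}^{n}\hat\sigma'_{1p}\,(\tau^{-1})_{pj},\]
a sum of $m$ terms in which every summation index satisfies $p\geq n-m+1\geq 2$ (using $m\leq n-1$).

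For part (i), with $j\in\{2,\dots,n-m\}$, I invoke Corollary \ref{corkeyB} for $\hat\sigma$: since $\hat\sigma_{1n}=0$, the transvection $t_{kl}(a\hat\sigma'_{1j}b)=t_{kl}(a\sigma'_{1j}b)$ is a product of $8$ $\E_n(R)$-conjugates of $\hat\sigma$ and $\hat\sigma^{-1}$, hence of $\sigma$ and $\sigma^{-1}$. For part (ii), with $j\in\{n-m+1,\dots,n\}$, I first split the transvection using relation $(R1)$ of Lemma \ref{lemelrel}:
\[t_{kl}(a\sigma'_{1j}b)=\prod_{p=n-m+1}^{n}t_{kl}(a\,\hat\sigma'_{1p}\,(\tau^{-1})_{pj}\,b),\]
so that each factor has the form $t_{kl}(a'\hat\sigma'_{1p}b')$ with $a'=a$ and $b'=(\tau^{-1})_{pj}b$. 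Applying Corollary \ref{corkeyB} to $\hat\sigma$ for each of the $m$ factors (legitimate since each $p\geq 2$ and $\hat\sigma_{1n}=0$) costs $8$ conjugates apiece, for a total of $8m$.

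The main obstacle is conceptual rather than computational: the hypothesis kills an entry only of $\sigma\tau$, which is \emph{not} a conjugate of $\sigma$, so a naive application of Corollary \ref{corkeyB} to $\sigma\tau$ would produce conjugates of $\sigma\tau$ that cannot be rewritten cheaply in terms of $\sigma^{\pm1}$. The device of conjugating by $\tau$ instead of right-multiplying — which works precisely because $\tau\in\E_m(R)$ fixes the first coordinate, so the entry $(1,n)$ and the whole first row of the inverse transform in a controlled way — is what removes this obstacle. After that the argument is bookkeeping, the only point needing care being the index bound $n-m+1\geq 2$ that guarantees Corollary \ref{corkeyB} applies to every factor.
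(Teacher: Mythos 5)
Your proposal is correct and follows essentially the same route as the paper: the paper also sets $\xi:=\sigma^{\tau}$, notes $\xi_{1n}=0$, applies Corollary \ref{corkeyB} to $\xi$, and then uses $\sigma^{-1}=(\xi^{-1})^{\tau^{-1}}$ to get exactly your case distinction $\sigma'_{1j}=\xi'_{1j}$ for $j\le n-m$ and $\sigma'_{1j}=\sum_{p=n-m+1}^{n}\xi'_{1p}\tau'_{pj}$ for $j\ge n-m+1$, splitting the transvection via $(R1)$ into $m$ factors of $8$ conjugates each. Your write-up just makes explicit the bookkeeping (first row of $\tau^{\pm1}$ equals $e_{1*}$, the index bound $n-m+1\geq 2$) that the paper leaves to the reader.
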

\begin{proof}
Set $\xi:=\sigma^{\tau}$. Then $\xi_{1n}=0$. It follows from Corollary \ref{corkeyB} that for any $2\leq j\leq n$, $k\neq l$ and $a,b\in R$, 
\begin{equation}
t_{kl}(a\xi'_{1j}b)\text{ is a product of }8~ \E_n(R)\text{-conjugates of }\sigma\text{ and }\sigma^{-1}.
\end{equation}
Clearly $\sigma^{-1}=(\xi^{-1})^{\tau^{-1}}$ and hence
\begin{equation}
\sigma'_{1j}=\begin{cases}
\xi'_{1j},&\text{ if } j\in \{2,\dots,n-m\},\\
\sum\limits_{p=n-m+1}^{n}\xi'_{1p}\tau'_{pj},&\text{ if } j\in \{n-m+1,\dots,n\}.
\end{cases}
\end{equation}
The assertion of the theorem follows from (7) and (8).
\end{proof}

\begin{theorem}\label{thmeucl_1}
Suppose that $R$ is a ring with $m$-term Euclidean algorithm for some $1\leq m\leq n-2$. Let $\sigma\in \GL_n(R)$, $i\neq j$, $k\neq l$ and $a,b,c\in R$. Then 
\begin{enumerate}[(i)]
\itemsep0pt 
\item $t_{kl}(a\sigma_{ij}b)$ is a product of $8$ $\E_n(R)$-conjugates of $\sigma$ and $\sigma^{-1}$ and
\item $t_{kl}(a(c\sigma_{ii}-\sigma_{jj}c)b)$ is a product of $24$ $\E_n(R)$-conjugates of $\sigma$ and $\sigma^{-1}$.
\end{enumerate}
\end{theorem}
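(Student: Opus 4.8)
The plan is to prove (i) by reducing, via Lemma \ref{lemp}, to the single canonical entry $\sigma_{12}$: once I know that $t_{kl}(a\sigma_{12}b)$ is a product of $8$ $\E_n(R)$-conjugates of $\sigma$ and $\sigma^{-1}$ for every $k\neq l$ and $a,b\in R$, I choose $\tau\in\Perm_n(R)$ with $(\sigma^\tau)_{12}=\sigma_{ij}$ and apply this fact to the matrix $\sigma^\tau$; since an $\E_n(R)$-conjugate $(\sigma^\tau)^\xi=\sigma^{\tau\xi}$ is again an $\E_n(R)$-conjugate of $\sigma$, this yields $t_{kl}(a\sigma_{ij}b)$ as a product of $8$ conjugates, exactly as in the proofs of Theorems \ref{thmcomm} and \ref{thmsr=1}. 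So the entire problem collapses to producing $t_{kl}(a\sigma_{12}b)$ with $8$ conjugates.

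To produce that entry I would feed $\sigma^{-1}$ into Lemma \ref{lemeucl}. That lemma requires a $\tau\in\E_m(R)$ (embedded in the last $m$ columns) with $(\sigma^{-1}\tau)_{1n}=0$, and then outputs the entries $(\sigma^{-1})'_{1j}=\sigma_{1j}$ for $j\in\{2,\dots,n-m\}$ at the cost of only $8$ conjugates of $\sigma^{-1}$ and $(\sigma^{-1})^{-1}=\sigma$. First I would obtain such a $\tau$: apply the $m$-term Euclidean algorithm to the length-$m$ sub-row $(\sigma'_{1,n-m+1},\dots,\sigma'_{1n})$ to get $\tau_0\in\E_m(R)$ for which this sub-row acquires a zero entry, say at position $s$, and then post-compose with the generalised permutation $p_{sn}$ (still in $\E_m(R)$) to move the zero to the last coordinate, so that $\tau=\tau_0p_{sn}$ satisfies $(\sigma^{-1}\tau)_{1n}=0$. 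Lemma \ref{lemeucl}(i) applied to $\sigma^{-1}$ then gives $t_{kl}(a\sigma_{1j}b)$ as a product of $8$ conjugates for each $j\in\{2,\dots,n-m\}$; the hypothesis $m\leq n-2$ is precisely what guarantees $n-m\geq 2$, so $j=2$ is admissible and we land in the ``$8$'' regime rather than the ``$8m$'' regime. Taking $j=2$ proves the reduced claim, and hence (i).

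For (ii) I would transcribe the argument of Theorem \ref{thmNeum}(ii). Conjugating by $t_{ji}(-c)$ places $c\sigma_{ii}-\sigma_{jj}c+\sigma_{ji}-c\sigma_{ij}c$ in position $(j,i)$ of $\sigma^{t_{ji}(-c)}$, so part (i) applied to $\sigma^{t_{ji}(-c)}$ renders $t_{kl}(a(c\sigma_{ii}-\sigma_{jj}c+\sigma_{ji}-c\sigma_{ij}c)b)$ as $8$ conjugates; part (i) applied to $\sigma$ renders the correction term $t_{kl}(a(c\sigma_{ij}c-\sigma_{ji})b)=t_{kl}(ac\sigma_{ij}cb)t_{kl}(-a\sigma_{ji}b)$ as $8+8=16$ conjugates, and the product of the two, via relation (R1), collapses to $t_{kl}(a(c\sigma_{ii}-\sigma_{jj}c)b)$ for a total of $24$.

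The only genuinely new ingredient over the commutative and stable-range cases is the Euclidean reduction in the second paragraph, so that is where I expect the main obstacle to lie: one must check that the zero entry produced by the $m$-term algorithm can be transported to position $(1,n)$ while staying inside the embedded $\E_m(R)$, and that the index constraint $m\leq n-2$ keeps us in the cheap branch $j\in\{2,\dots,n-m\}$ of Lemma \ref{lemeucl} so that the conjugate count is $8$ rather than $8m$. Everything after that is a formal application of Lemma \ref{lemp} together with the verbatim transcription of Theorem \ref{thmNeum}(ii).
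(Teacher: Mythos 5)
Your proof is correct and takes essentially the same route as the paper: apply the $m$-term Euclidean algorithm to the trailing sub-row of $\sigma^{-1}$, invoke Lemma \ref{lemeucl}(i) with $j=2$ (using $m\leq n-2$ to guarantee $n-m\geq 2$), finish with Lemma \ref{lemp} after swapping the roles of $\sigma$ and $\sigma^{-1}$, and transcribe Theorem \ref{thmNeum}(ii) for part (ii). The only difference is cosmetic: you make explicit the generalised permutation inside the embedded $\E_m(R)$ that moves the zero entry produced by the algorithm to the last coordinate, a step the paper leaves implicit.
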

\begin{proof}
\begin{enumerate}[(i)]
\item Since $R$ is a ring with $m$-term Euclidean algorithm, there is a $\tau\in \E_m(R)$ such that $(\sigma\tau)_{1n}=0$. It follows from Lemma \ref{lemeucl} that $t_{kl}(a\sigma'_{12}b)$ is a product of $8$ $\E_n(R)$-conjugates of $\sigma$ and $\sigma^{-1}$ (note that $n-m\geq 2$). Assertion (i) now follows from Lemma \ref{lemp} (after swapping the roles of $\sigma$ and $\sigma^{-1}$). 
\item See the proof of Theorem \ref{thmNeum}.
\end{enumerate}
\end{proof}

\begin{theorem}\label{thmeucl_2}
Suppose that $R$ is a ring with $n-1$-term Euclidean algorithm. Let $\sigma\in \GL_n(R)$, $i\neq j$, $k\neq l$ and $a,b,c\in R$. Then 
\begin{enumerate}[(i)]
\itemsep0pt 
\item $t_{kl}(a\sigma_{ij}b)$ is a product of $8(n-1)$ $\E_n(R)$-conjugates of $\sigma$ and $\sigma^{-1}$ and
\item $t_{kl}(a(c\sigma_{ii}-\sigma_{jj}c)b)$ is a product of $24(n-1)$ $\E_n(R)$-conjugates of $\sigma$ and $\sigma^{-1}$.
\end{enumerate}
\end{theorem}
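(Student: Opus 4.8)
The plan is to imitate the proof of Theorem~\ref{thmeucl_1}: use the Euclidean algorithm to clear the entry in position $(1,n)$, then read off a transvection from Lemma~\ref{lemeucl}. The essential difference is that here $m$ is forced to be $n-1$, so we land in the more expensive branch of that lemma and pick up the factor $n-1$.

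For part~(i), I would apply the $(n-1)$-term Euclidean algorithm to the row $(\sigma_{12},\dots,\sigma_{1n})\in {}^{n-1}\!R$, on which the embedded copy of $\E_{n-1}(R)$ (acting on the coordinates $2,\dots,n$) operates from the right. This produces a $\tau_0\in\E_{n-1}(R)$ making one of the entries $(\sigma\tau_0)_{1,2},\dots,(\sigma\tau_0)_{1,n}$ vanish; post-multiplying by a suitable generalised permutation matrix, which again lies in $\E_{n-1}(R)$, transports that zero to column $n$, giving $\tau\in\E_{n-1}(R)$ with $(\sigma\tau)_{1n}=0$. Feeding this into Lemma~\ref{lemeucl} with $m=n-1$, I would observe that $n-m=1$, so the cheap index range $\{2,\dots,n-m\}$ of part~(i) of that lemma is empty while $j=2$ already belongs to the range $\{n-m+1,\dots,n\}=\{2,\dots,n\}$ covered by part~(ii). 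Hence $t_{kl}(a\sigma'_{12}b)$ is a product of $8m=8(n-1)$ $\E_n(R)$-conjugates of $\sigma$ and $\sigma^{-1}$. Running this entire derivation with $\sigma^{-1}$ in place of $\sigma$ turns $\sigma'_{12}$ into $\sigma_{12}$, and then Lemma~\ref{lemp} redistributes the result to an arbitrary position $(i,j)$, proving~(i).

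For part~(ii), I would copy the commutator manipulation from Theorem~\ref{thmNeum}(ii). Conjugating by $t_{ji}(-c)$ places $c\sigma_{ii}-\sigma_{jj}c+\sigma_{ji}-c\sigma_{ij}c$ at position $(j,i)$ of $\sigma^{t_{ji}(-c)}$; since conjugates of $\sigma^{t_{ji}(-c)}$ are conjugates of $\sigma$, applying part~(i) to this matrix expresses $t_{kl}(a(c\sigma_{ii}-\sigma_{jj}c+\sigma_{ji}-c\sigma_{ij}c)b)$ as a product of $8(n-1)$ conjugates. Two further applications of part~(i) to $\sigma$ itself express $t_{kl}(ac\sigma_{ij}cb)$ and $t_{kl}(-a\sigma_{ji}b)$, hence their product $t_{kl}(a(c\sigma_{ij}c-\sigma_{ji})b)$, as a product of $16(n-1)$ conjugates. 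By~(R1) the product of these two transvections cancels the parasitic terms $\sigma_{ji}$ and $c\sigma_{ij}c$, leaving $t_{kl}(a(c\sigma_{ii}-\sigma_{jj}c)b)$ as a product of $24(n-1)$ conjugates.

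Once Lemma~\ref{lemeucl} is available the whole proof is routine bookkeeping, so I anticipate no real obstacle. The only conceptual point is recognising that the expensive branch is unavoidable: with $m=n-1$ there is no column to the left of the pivot, so one cannot reach the favourable case $j\le n-m$ of Lemma~\ref{lemeucl}, and the factor $n-1$ is genuinely intrinsic to this endpoint value of $m$ rather than a slack estimate.
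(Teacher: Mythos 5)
Your proposal is correct and takes essentially the same route as the paper: produce $\tau\in\E_{n-1}(R)$ with $(\sigma\tau)_{1n}=0$, invoke Lemma~\ref{lemeucl} with $m=n-1$ (where only the $8m=8(n-1)$ branch is available), swap the roles of $\sigma$ and $\sigma^{-1}$, finish with Lemma~\ref{lemp}, and handle part~(ii) by the commutator trick of Theorem~\ref{thmNeum}(ii). The only difference is that you spell out details the paper leaves implicit, such as transporting the zero entry into column $n$ by a generalised permutation matrix inside $\E_{n-1}(R)$.
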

\begin{proof}
\begin{enumerate}[(i)]
\item Since $R$ is a ring with $n-1$-term Euclidean algorithm, there is a $\tau\in \E_{n-1}(R)$ such that $(\sigma\tau)_{1n}=0$. It follows from Lemma \ref{lemeucl} that $t_{kl}(a\sigma'_{12}b)$ is a product of $8(n-1)$ $\E_n(R)$-conjugates of $\sigma$ and $\sigma^{-1}$. Assertion (i) now follows from Lemma \ref{lemp} (after swapping the roles of $\sigma$ and $\sigma^{-1}$). 
\item See the proof of Theorem \ref{thmNeum}.
\end{enumerate}
\end{proof}

Theorems \ref{thmeucl_1} and \ref{thmeucl_2} show that if $\sigma\in \GL_n(R)$ where $R$ is a ring with $m$-term Euclidean algorithm for some $1\leq m\leq n-1$, then the nondiagonal entries of $\sigma$ can be ``extracted'' (i.e. the matrices $t_{kl}(\sigma_{ij})~(k\neq l, i\neq j)$ lie in $\sigma^{\E_n(R)}$). Does this also hold for $m=n$? The author does not know the answer to this question. However, if $R$ is a ring with {\it strong} $n$-term Euclidean algorithm (see Definition \ref{defstrong} below), then the nondiagonal entries of $\sigma$ can be extracted, as Theorem \ref{thmeucl_3} shows.

\begin{definition}\label{defstrong}
We call $R$ a {\it ring with strong $m$-term Euclidean algorithm} if for any row vector $v\in {}^m\!R$ there is a $\tau\in \E_m(R)$ with $\tau_{11}=1$ such that $(v\tau)_{n}=0$.
\end{definition}

\begin{remark}
If $R$ is a ring with strong $m$-term Euclidean algorithm, then obviously $R$ is also a ring with $m$-term Euclidean algorithm.
\end{remark}

\begin{theorem}\label{thmeucl_3}
Suppose that $R$ is a ring with strong $n$-term Euclidean algorithm. Let $\sigma\in \GL_n(R)$, $i\neq j$, $k\neq l$ and $a,b,c\in R$. Then 
\begin{enumerate}[(i)]
\itemsep0pt 
\item $t_{kl}(a\sigma_{ij}b)$ is a product of $80(n-1)$ $\E_n(R)$-conjugates of $\sigma$ and $\sigma^{-1}$ and
\item $t_{kl}(a(c\sigma_{ii}-\sigma_{jj}c)b)$ is a product of $240(n-1)$ $\E_n(R)$-conjugates of $\sigma$ and $\sigma^{-1}$.
\end{enumerate}
\end{theorem}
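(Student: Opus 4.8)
The plan is to follow the blueprint of Theorem \ref{thmeucl_2}, reducing the extraction of a nondiagonal entry of $\sigma$ to the key results of Section 4, while compensating for the fact that the strong $n$-term algorithm returns a \emph{full} elementary matrix rather than one supported on the last coordinates. First I would apply the strong $n$-term Euclidean algorithm to the first row $\sigma_{1*}$, obtaining a $\tau\in\E_n(R)$ with $\tau_{11}=1$ and $(\sigma\tau)_{1n}=0$. Writing $\xi:=\sigma\tau$ we then have $\xi_{1n}=0$, and the goal becomes to express $t_{kl}(a\sigma'_{12}b)$ (say) as a bounded product of $\E_n(R)$-conjugates of $\sigma$ and $\sigma^{-1}$; the arbitrary $t_{kl}(a\sigma_{ij}b)$ follows from this by Lemma \ref{lemp} after swapping the roles of $\sigma$ and $\sigma^{-1}$, exactly as in Theorem \ref{thmeucl_2}.

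The essential obstacle is that this is genuinely harder than the $m<n$ case. In Theorem \ref{thmeucl_2} the matrix $\tau$ lives in $\E_{n-1}(R)$ and fixes the first coordinate, so that $\sigma^\tau$ has the same first row as $\sigma\tau$; one may then feed the \emph{conjugate} $\sigma^\tau$ into Corollary \ref{corkeyB}, and every $\E_n(R)$-conjugate of $\sigma^\tau$ is again an $\E_n(R)$-conjugate of $\sigma$. Here this breaks down completely: $\tau$ acts nontrivially on coordinate $1$, the passage to $\sigma^\tau$ no longer preserves the vanishing of the $(1,n)$-entry, and an $\E_n(R)$-conjugate $\xi^g=\sigma^g\,\tau^g$ of $\xi=\sigma\tau$ is only a conjugate of $\sigma$ up to the stray elementary factor $\tau^g\in\E_n(R)$. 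The single piece of information that survives is the relation $\sigma^{-1}=\tau\xi^{-1}$ together with $\tau_{11}=1$, which gives
\[
\sigma'_{1j}=(\xi^{-1})_{1j}+\sum_{p=2}^{n}\tau_{1p}\,(\xi^{-1})_{pj}\qquad(2\le j\le n).
\]
The $n-1$ correction terms in this identity are exactly the source of the factor $(n-1)$ in the bound: each of them must be produced separately, and each is charged a Banach-type cost of $80$, giving $80(n-1)$.

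The mechanism for producing these pieces — and the part where most of the work lies — is the pair/commutator calculus of Section 4, used so as to \emph{absorb} the full factor $\tau$ rather than leaving it as stray noise. Instead of treating $\xi$ as a standalone matrix and invoking Corollary \ref{corkeyB} as a black box, I would take $a_1:=\tau\in\E_n(R)$ and $b_1:=\tau^{-1}\sigma^{-1}$, so that the tracked product $a_1b_1=\sigma^{-1}$ is a single conjugate, and then run a type-(c) reduction $(a_1,b_1)\xrightarrow{g_1,\dots,g_r}(t_{kl}(\ast),e)$ whose multipliers $g_s\in\E_n(R)$ are chosen using $\xi_{1n}=0$ and the invertibility encoded by $\tau_{11}=1$. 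By Lemma \ref{lemredux} the resulting transvection is a product of $2^r$ $\E_n(R)$-conjugates of $\sigma$ and $\sigma^{-1}$, with $\tau$ having been consumed as the elementary part $a_1$. The role previously played by property $(1)$ in the proof of Theorem \ref{thmBan} — making an appropriate entry invertible so the reduction terminates — is played here, constructively, by the strong Euclidean step and the unit entry $\tau_{11}=1$, much as the unit entries did in Corollaries \ref{corkeyA}–\ref{corkeyC}; this is why the per-piece cost matches the Banach computation. Assembling the $n-1$ pieces and finishing with Lemma \ref{lemp} yields assertion (i).

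Finally, assertion (ii) I would deduce from (i) by the verbatim argument of Theorem \ref{thmNeum}(ii): conjugating $\sigma$ by $t_{ji}(-c)$ places $c\sigma_{ii}-\sigma_{jj}c+\sigma_{ji}-c\sigma_{ij}c$ at position $(j,i)$, and combining the extraction of this entry with that of $c\sigma_{ij}c-\sigma_{ji}$ gives $t_{kl}(a(c\sigma_{ii}-\sigma_{jj}c)b)$ as a product of $80(n-1)+2\cdot80(n-1)=240(n-1)$ conjugates. The hard part will be the choice of the reduction multipliers $g_s$ in the third paragraph: one must arrange the absorption of a \emph{general} elementary $\tau$ while keeping the number of steps $r$ bounded independently of how many transvections compose $\tau$, and controlling $r$ in this way is precisely what the strengthened hypothesis $\tau_{11}=1$ (as opposed to merely $\tau\in\E_n(R)$) is designed to make possible.
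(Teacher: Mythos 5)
Your setup agrees with the paper at the outer level (reduce to extracting $\sigma'_{12}$, finish with Lemma \ref{lemp} after swapping the roles of $\sigma$ and $\sigma^{-1}$, deduce (ii) from (i) as in Theorem \ref{thmNeum}), and you correctly diagnose the obstacle: the strong Euclidean step produces a full $\tau\in\E_n(R)$ acting on the first coordinate, so neither the conjugation trick of Theorem \ref{thmeucl_2} nor a black-box appeal to Corollary \ref{corkeyB} applies. But the core of your argument --- the third paragraph --- is not a proof. You posit a single type-(c) reduction $(\tau,\tau^{-1}\sigma^{-1})\xrightarrow{g_1,\dots,g_r}(t_{kl}(\ast),e)$ that ``absorbs'' all of $\tau$, never exhibit the multipliers $g_s$, and yourself flag their construction as the hard part. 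There is no reason such a reduction with bounded $r$ exists: every reduction in the paper terminates because its elementary component is a product of transvections with controlled support (e.g.\ $\prod_p t_{pn}(x_p)$ in Proposition \ref{propkey}), so that the commutators collapse via (R1)--(R3); for an arbitrary $\tau\in\E_n(R)$ the commutator $[\tau^{-1},g]$ is just another arbitrary element of $\E_n(R)$, and the hypothesis $\tau_{11}=1$ by itself does nothing to control $r$. Moreover, your accounting cannot produce the stated bound: in your identity $\sigma'_{1j}=(\xi^{-1})_{1j}+\sum_{p=2}^{n}\tau_{1p}(\xi^{-1})_{pj}$ the correction terms involve \emph{all} entries $\tau_{1p}$, for which you have no extraction mechanism, and the main term $(\xi^{-1})_{1j}$ also has unaccounted cost (conjugates of $\xi=\sigma\tau$ are not conjugates of $\sigma$, as you yourself note), so even formally the count exceeds $80(n-1)$.

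What the paper actually does, and what is missing from your proposal, consists of three concrete ideas. First, a normalization: since $\tau_{11}=1$, replacing $\tau$ by $\tau\prod_{j=2}^{n-1}t_{1j}(-\tau_{1j})$ one may assume $\tau_{1j}=0$ for $2\le j\le n-1$ (this does not disturb $(\sigma\tau)_{1n}=0$), so that $\tau=\rho\,t_{1n}(\tau_{1n})$ with $\rho\in\E_n(R)$ having trivial first row; the ``stray noise'' is thus compressed into the single transvection $t_{1n}(\tau_{1n})$. Second, the entry $\tau_{1n}$ is itself extracted at cost $8$: setting $\xi:=\rho^{-1}\sigma\tau=\sigma^{\rho}t_{1n}(\tau_{1n})$, which satisfies $\xi_{1n}=0$, one runs
\[(t_{1n}(-\tau_{1n}),\xi)\xrightarrow{t_{n2}(b'),t_{31}(a'),t_{21}(-1)}(t_{31}(a'\tau_{1n}b'),e),\]
whose tracked product is $t_{1n}(-\tau_{1n})\xi=\sigma^{\tau}$ --- a \emph{single} conjugate of $\sigma$ --- so Lemma \ref{lemredux} gives $2^3=8$ conjugates of $\sigma^{\pm1}$. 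Third, the accounting: Corollary \ref{corkeyB} applied to $\xi$ writes $t_{k''l''}(a''\xi'_{1j}b'')$ as a product of $8$ conjugates of $\xi^{\pm1}$, and since $\xi=\sigma^{\rho}t_{1n}(\tau_{1n})$, each such conjugate splits as one conjugate of $\sigma^{\pm1}$ times a conjugate of $t_{1n}(\pm\tau_{1n})$, the latter costing $8$ by the previous step; hence each costs $9$, for a total of $72$. With the normalized $\tau$ and $\rho'_{12}=0$, the correct expansion is $\sigma'_{12}=\sum_{j=2}^{n}(\xi'_{1j}+\tau_{1n}\xi'_{nj})\rho'_{j2}$, exactly $n-1$ terms, each costing $72+8=80$, giving $80(n-1)$. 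Your proposal contains none of these steps, so it has a genuine gap at its center.
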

\begin{proof}
\begin{enumerate}[(i)]
\item Since $R$ is a ring with strong $n$-term Euclidean algorithm, there is a $\tau\in \E_{n}(R)$ with $\tau_{11}=1$ such that $(\sigma\tau)_{1n}=0$. Clearly the matrix $\tau\prod\limits_{j=2}^{n-1}t_{1j}(-\tau_{1j})$ has the same properties as $\tau$. Hence we may assume that $\tau_{1j}=0~(j=2,\dots,n-1)$. \\
{\bf Step 1} 
Clearly $\tau=\rho t_{1n}(\tau_{1n})$ for some $\rho\in \E_n(R)$ with trivial first row. Set $\xi:=t_{1n}(\tau_{1n})\sigma^{\tau}=\rho^{-1}\sigma\tau$. Then $\xi_{1n}=0$. One checks easily that 
\[(t_{1n}(-\tau_{1n}),\xi)\xrightarrow{t_{n2}(b'),t_{31}(a'),t_{21}(-1)}(t_{31}(a'\tau_{1n}b'), e)\]
for any $a',b'\in R$. It follows from Lemma \ref{lemp} and Lemma \ref{lemredux} that 
\begin{equation}
t_{k'l'}(a'\tau_{1n}b')\text{ is a product of }8~\E_n(R)\text{-conjugates of }\sigma\text{ and }\sigma^{-1}
\end{equation}
for any $k'\neq l'$ and $a',b'\in R$.\\
{\bf Step 2} Let $\xi$ be defined as in Step 1. Corollary \ref{corkeyB} implies that for any $2\leq j\leq n$, $k''\neq l''$ and $a'',b''\in R$, the matrix $t_{k''l''}(a''\xi'_{1j}b'')$ is a product of $8$ $\E_n(R)$-conjugates of $\xi$ and $\xi^{-1}$. Hence, by (9),
\begin{equation}
t_{k''l''}(a''\xi'_{1j}b'')\text{ is a product of }8\cdot 9=72~ \E_n(R)\text{-conjugates of }\sigma\text{ and }\sigma^{-1}.
\end{equation}
{\bf Step 3} Clearly $\sigma^{-1}=\tau\xi^{-1}\rho^{-1}$ and hence $\sigma'_{12}=\sum\limits_{j=2}^{n}(\xi'_{1j}+\tau_{1n}\xi'_{nj})\rho'_{j2}$. It follows from (9) and (10) that $t_{kl}(a\sigma'_{12}b)$ is a product of $(72+8)(n-1)=80(n-1)$ $\E_n(R)$-conjugates of $\sigma$ and $\sigma^{-1}$. Assertion (i) now follows from Lemma \ref{lemp} (after swapping the roles of $\sigma$ and $\sigma^{-1}$). 
\item See the proof of Theorem \ref{thmNeum}.
\end{enumerate}
\end{proof}

\section{RDU over almost commutative rings}
In this subsection $C$ denotes the center of $R$.

\begin{definition}
Let $x\in R$. An element $z\in C$ is called a {\it central multiple} of $x$ if $z=xy=yx$ for some $y\in R$. The ideal of $C$ consisting of all central multiples of $x$ is denoted by $\Oo(x)$.
\end{definition}

\begin{lemma}\label{lemalmcom1}
Let $\sigma\in \GL_n(R)$ and $z\in \Oo(\sigma_{11})$. Then for any $k\neq l$ and $a,b\in R$, $t_{kl}(az\sigma_{12}b)$ is a product of of $8$ $\E_n(R)$-conjugates of $\sigma$ and $\sigma^{-1}$.
\end{lemma}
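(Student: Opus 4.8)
The plan is to apply Proposition \ref{propkey}(i) to extract the element $w\sigma_{12}$ for a suitable $w$, and then to recover $z\sigma_{12}$ by absorbing $\sigma_{11}$ into the free left argument. Since $z$ is a central multiple of $\sigma_{11}$, I may fix $w\in R$ with $z=\sigma_{11}w=w\sigma_{11}$, where moreover $z$ lies in the center $C$, so that $z$ commutes with every entry of $\sigma$.

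First I would set $x_1:=w\sigma_{12}$, $x_2:=-z$ and $x_3=\dots=x_n:=0$. Using that $z$ is central,
\[\sum_{p=1}^n\sigma_{1p}x_p=\sigma_{11}w\sigma_{12}-\sigma_{12}z=z\sigma_{12}-z\sigma_{12}=0,\]
so the hypotheses of Proposition \ref{propkey}(i) are satisfied, with the proposition's parameter taken to be $1$ and with $x_n=0$ (here the standing assumption $n\geq 3$ is exactly what guarantees that the last coordinate $x_n$ can be taken to be $0$). Hence for any $k\neq l$ and any $a',b\in R$ the elementary transvection $t_{kl}(a'w\sigma_{12}b)$ is a product of $8$ $\E_n(R)$-conjugates of $\sigma$ and $\sigma^{-1}$.

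Finally I would specialise the left coefficient to $a'=a\sigma_{11}$. Because the conclusion just obtained holds for \emph{every} $a'\in R$, it holds in particular for $a\sigma_{11}$, and $a\sigma_{11}\cdot w\sigma_{12}=a(\sigma_{11}w)\sigma_{12}=az\sigma_{12}$. Therefore $t_{kl}(az\sigma_{12}b)$ is a product of $8$ $\E_n(R)$-conjugates of $\sigma$ and $\sigma^{-1}$, which is the desired statement; for general $i\neq j$ one invokes Lemma \ref{lemp} as in the previous proofs.

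The one point that makes the argument work — and the step I expect to be the only real subtlety — is that one cannot feed $z\sigma_{12}$ directly into Proposition \ref{propkey}(i): the natural choice $x_1=z\sigma_{12}$, $x_2=-z\sigma_{11}$ produces $\sigma_{11}(z\sigma_{12})+\sigma_{12}(-z\sigma_{11})=z[\sigma_{11},\sigma_{12}]$, which need not vanish over a noncommutative ring. The trick is to extract the slightly different element $w\sigma_{12}$, for which the centrality of $z$ does force the first-row relation to vanish, and only afterwards to multiply $\sigma_{11}$ back in on the left through the arbitrary argument $a$. Centrality of $z$ is used exactly once, namely to commute $z$ past $\sigma_{12}$ in the displayed relation.
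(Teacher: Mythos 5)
Your proof is correct and follows essentially the same route as the paper: both pick $w$ (the paper calls it $y$) with $z=\sigma_{11}w=w\sigma_{11}$, use centrality of $z$ to get the first-row relation $\sigma_{11}(w\sigma_{12})-\sigma_{12}z=0$, and then apply Proposition \ref{propkey}(i). The paper merely leaves implicit the final step you spell out, namely absorbing $\sigma_{11}$ into the free left coefficient $a$ to pass from $w\sigma_{12}$ to $z\sigma_{12}=\sigma_{11}w\sigma_{12}$.
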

\begin{proof}
Since $z\in \Oo(\sigma_{11})$, there is a $y\in R$ such that $z=\sigma_{11}y=y\sigma_{11}$. Clearly $\sigma_{11}y\sigma_{12}-\sigma_{12}\sigma_{11}y=0$. It follows from Proposition \ref{propkey}(i) that $t_{kl}(az\sigma_{12}b)$ is a product of $8$ $\E_n(R)$-conjugates of $\sigma$ and $\sigma^{-1}$.
\end{proof}

Set $\E^{**}_n(R):=\langle t_{k1}(x)\mid x\in R,k\neq 1\rangle$.

\begin{lemma}\label{lemalmcom2}
Let $\sigma\in \GL_n(R)$. Suppose that there are $\tau_p\in E^{**}_n(R)~(1\leq p\leq q)$ and $z_p\in \Oo((\sigma^{\tau_{_p}})_{11})~(1\leq p\leq q)$ such that $\sum\limits_{p=1}^{q}z_p=1$. Then for any $k\neq l$ and $a,b\in R$, $t_{kl}(a\sigma_{12}b)$ is a product of of $8q$ $\E_n(R)$-conjugates of $\sigma$ and $\sigma^{-1}$. 
\end{lemma}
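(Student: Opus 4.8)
The plan is to apply Lemma~\ref{lemalmcom1} to each conjugate $\sigma^{\tau_p}$ separately and then glue the resulting transvections together using relation (R1). The first thing I would record is the structural description of $E^{**}_n(R)$. By (R2) the generators $t_{k1}(x)$ ($k\neq 1$) commute pairwise, so every $\tau\in E^{**}_n(R)$ has the form $\tau=e+v$ with $v=\sum_{k=2}^{n}x_k e^{k1}$ for suitable $x_2,\dots,x_n\in R$. Since $v$ is supported in the first column with zeros in row $1$, one has $v^2=0$ and hence $\tau^{-1}=e-v$. Expanding
\[
\sigma^{\tau}=(e-v)\sigma(e+v)=\sigma+\sigma v-v\sigma-v\sigma v,
\]
and noting that $\sigma v$ and $v\sigma v$ are supported in column $1$ while $v\sigma$ and $v\sigma v$ vanish in row $1$, I would read off that $(\sigma^{\tau})_{1j}=\sigma_{1j}$ for every $j\geq 2$; in particular $(\sigma^{\tau})_{12}=\sigma_{12}$. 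This invariance of the $(1,2)$ entry under conjugation by $E^{**}_n(R)$ is the decisive point, and it is exactly what motivates restricting the $\tau_p$ to $E^{**}_n(R)$: the $(1,1)$ entry is free to change (so that the various $(\sigma^{\tau_p})_{11}$ can contribute different central multiples $z_p$), while the entry $\sigma_{12}$ that we want to extract stays fixed.

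Next, for a fixed $p$, I would apply Lemma~\ref{lemalmcom1} to the matrix $\sigma^{\tau_p}\in\GL_n(R)$ together with $z_p\in\Oo((\sigma^{\tau_p})_{11})$. This gives that $t_{kl}(az_p(\sigma^{\tau_p})_{12}b)$ is a product of $8$ $\E_n(R)$-conjugates of $\sigma^{\tau_p}$ and $(\sigma^{\tau_p})^{-1}$. By the previous paragraph $(\sigma^{\tau_p})_{12}=\sigma_{12}$, so this is in fact a statement about $t_{kl}(az_p\sigma_{12}b)$. Moreover, since $\tau_p\in E^{**}_n(R)\subseteq\E_n(R)$, any $\E_n(R)$-conjugate of $\sigma^{\tau_p}$ is $(\sigma^{\tau_p})^{\eta}=\sigma^{\tau_p\eta}$ with $\tau_p\eta\in\E_n(R)$, hence again an $\E_n(R)$-conjugate of $\sigma$; likewise $(\sigma^{\tau_p})^{-1}=(\sigma^{-1})^{\tau_p}$ contributes $\E_n(R)$-conjugates of $\sigma^{-1}$. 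Thus for each $p$ the transvection $t_{kl}(az_p\sigma_{12}b)$ is a product of $8$ $\E_n(R)$-conjugates of $\sigma$ and $\sigma^{-1}$.

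Finally I would combine these over $p$. Since the $z_p$ are central, distributivity gives $\sum_{p=1}^{q}az_p\sigma_{12}b=a\bigl(\sum_{p=1}^{q}z_p\bigr)\sigma_{12}b=a\sigma_{12}b$, using the hypothesis $\sum_{p}z_p=1$. Hence, by (R1),
\[
t_{kl}(a\sigma_{12}b)=\prod_{p=1}^{q}t_{kl}(az_p\sigma_{12}b),
\]
and as each factor on the right is a product of $8$ $\E_n(R)$-conjugates of $\sigma$ and $\sigma^{-1}$, the left-hand side is a product of $8q$ such conjugates, which is the claim. The only genuine computation here is the invariance $(\sigma^{\tau_p})_{12}=\sigma_{12}$ from the first paragraph; everything else is bookkeeping with (R1) and the observation that conjugates of conjugates remain $\E_n(R)$-conjugates. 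I therefore expect the noncommutative verification of that invariance to be the one place demanding care, and the rest to be essentially formal.
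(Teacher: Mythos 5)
Your proof is correct and follows essentially the same route as the paper: apply Lemma~\ref{lemalmcom1} to each $\sigma^{\tau_p}$, use $(\sigma^{\tau_p})_{12}=\sigma_{12}$ and the fact that $\E_n(R)$-conjugates of $\sigma^{\tau_p}$ are $\E_n(R)$-conjugates of $\sigma$, then glue with (R1) and centrality of the $z_p$. The only difference is that you spell out the matrix computation behind the invariance $(\sigma^{\tau_p})_{12}=\sigma_{12}$, which the paper asserts without detail.
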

\begin{proof}
The previous lemma implies that for any $1\leq p \leq q$, the elementary transvection $t_{kl}(az_p(\sigma^{\tau_{p}})_{12}b)=t_{kl}(az_p\sigma_{12}b)$ (for this equation we have used that $\tau_{p}\in E^{**}_n(R)$) is a product of $8$ elementary $\sigma$-conjugates (note that any elementary $\sigma^{\tau_{p}}$-conjugate is also an elementary $\sigma$-conjugate). It follows that $t_{kl}(a\sigma_{12}b)=t_{kl}(az_1\sigma_{12}b)\dots t_{kl}(az_q\sigma_{12}b)$ is a product of $8q$ elementary $\sigma$-conjugates. 
\end{proof}

Recall that $R$ is called {\it almost commutative} if it is finitely generated as a $C$-module. We will show that if $R$ is almost commutative, then the requirements of Lemma \ref{lemalmcom2} are always satisfied. 

We denote by $\Max(C)$ the set of all maximal ideals of $C$. If $\m\in \Max(C)$, then we denote by $R_{\m}$ the localisation of $R$ with respect to the multiplicative set $S_{\m}:= C\setminus \m$. We denote by $Q_{\m}$ the quotient $R_{\m}/\Rad(R_{\m})$ where $\Rad(R_{\m})$ is the Jacobson radical of $R_{\m}$. Moreover, we denote the canonical ring homomorphism $R\rightarrow Q_{\m}$ by $\phi_{\m}$. 

\begin{lemma}\label{lemalmcom3}
If $R$ is almost commutative, then for any $\m\in \Max(C)$, $\phi_{\m}$ is surjective and $\sr(Q_{\m})=1$.
\end{lemma}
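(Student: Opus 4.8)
The plan is to exploit that localisation at $\m$ turns $R$ into a ring that is module-finite over the commutative \emph{local} ring $C_\m:=S_\m^{-1}C$, and then to invoke the standard structure theory of such rings. First I would record that $R_\m=S_\m^{-1}R$ is finitely generated as a module over $C_\m$ (the images of a finite $C$-generating set $e_1,\dots,e_N$ of $R$ generate it), that the image of $C_\m$ lies in the centre of $R_\m$, and that $C_\m$ is local with maximal ideal $\mathfrak{n}:=\m C_\m$ and residue field $\kappa:=C_\m/\mathfrak{n}\cong C/\m$.

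The key step is to prove $\mathfrak{n}R_\m\subseteq\Rad(R_\m)$. Since the image of $\mathfrak{n}$ is central, $\mathfrak{n}R_\m$ is a two-sided ideal, so it suffices to show that $1-s$ is a unit for every $s\in\mathfrak{n}R_\m$. For such an $s$ one has $sR_\m\subseteq\mathfrak{n}R_\m$; expressing multiplication by $s$ on the generators $\eta(e_1),\dots,\eta(e_N)$ as a matrix over $C_\m$ with entries in $\mathfrak{n}$ and applying the Cayley--Hamilton (determinant) trick yields a relation $s^N+b_{N-1}s^{N-1}+\dots+b_0=0$ with all $b_i\in\mathfrak{n}$. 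Passing to the commutative finite $C_\m$-subalgebra $B:=C_\m[s]$, this relation shows that the image of $s$ in $B/\mathfrak{n}B$ is nilpotent, hence lies in $\Rad(B/\mathfrak{n}B)$; since $B$ is integral over the local ring $C_\m$, every maximal ideal of $B$ contracts to $\mathfrak{n}$, so $\mathfrak{n}B\subseteq\Rad(B)$, and therefore $s\in\Rad(B)$. Thus $1-s$ is a unit in $B\subseteq R_\m$, which proves $\mathfrak{n}R_\m\subseteq\Rad(R_\m)$.

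With this inclusion, $\bar A:=R_\m/\mathfrak{n}R_\m$ is a finite-dimensional $\kappa$-algebra, hence Artinian, and $Q_\m=R_\m/\Rad(R_\m)\cong\bar A/\Rad(\bar A)$ is semisimple Artinian. For the surjectivity of $\phi_\m$, note that $\bar A$ is spanned over $\kappa$ by the images of $e_1,\dots,e_N$; since the scalars $\kappa=C/\m$ are images of $C\subseteq R$ and the $e_i$ lie in $R$, any element $\sum\bar c_i\,\overline{\eta(e_i)}$ of $\bar A$ is the image of $\sum c_i e_i\in R$, so the composite $R\to R_\m\to\bar A$ is onto, and composing with $\bar A\twoheadrightarrow Q_\m$ shows that $\phi_\m$ is surjective. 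Finally, a semisimple Artinian ring is a finite product of matrix rings over division rings and is in particular semilocal, so $\sr(Q_\m)=1$ by Bass's theorem that semilocal rings have stable rank one.

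I expect the main obstacle to be exactly the inclusion $\mathfrak{n}R_\m\subseteq\Rad(R_\m)$: the surjectivity and the stable-rank computation are then formal, but this step is where the noncommutativity of $R$ must be reconciled with the commutative local base $C_\m$. Reducing to the commutative subalgebra $C_\m[s]$ via Cayley--Hamilton, rather than arguing with maximal one-sided ideals directly in $R_\m$, is what makes it go through cleanly.
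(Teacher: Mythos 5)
Your proof is correct, and its overall skeleton coincides with the paper's: localise at $\m$, note that $R_\m$ is module-finite over the local ring $C_\m$, establish that $\m$ (equivalently $\mathfrak{n}R_\m$) lies in $\Rad(R_\m)$, and deduce from this both the surjectivity of $\phi_\m$ and that $Q_\m$ is a semisimple algebra, finite-dimensional over the residue field, whence $\sr(Q_\m)=1$ by Bass. The difference lies in how the two ingredients are discharged. For the key inclusion the paper simply cites \cite[Corollary 5.9]{lam_book}, which gives $\Rad(C_\m)\subseteq\Rad(R_\m)$ for any ring module-finite over a central subring; you re-prove this from scratch via the determinant trick and lying-over applied to the commutative subalgebra $C_\m[s]$. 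Your argument is valid (modulo the cosmetic point that $B$ is integral over the \emph{image} of $C_\m$ in $R_\m$, which is local as a quotient of $C_\m$, since $C_\m\to R_\m$ need not be injective), but it is worth knowing that the standard proof of the cited fact is shorter: any simple left $R_\m$-module $M$ is finitely generated over $C_\m$, so Nakayama gives $\Rad(C_\m)M\neq M$, and since $\Rad(C_\m)M$ is an $R_\m$-submodule of $M$ it must vanish, i.e.\ $\Rad(C_\m)$ annihilates every simple module. For surjectivity the paper uses a two-line computation --- given $\frac{r}{s}\in R_\m$, write $sc+m=1$ with $c\in C$, $m\in\m$, and multiply by $\frac{r}{s}$ to get $\frac{r}{s}\in rc+\Rad(R_\m)$ --- whereas you pass to the Artinian quotient $\bar A=R_\m/\mathfrak{n}R_\m$ and use that it is spanned over $\kappa\cong C/\m$ by the images of the module generators. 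Both are sound; the paper's route is more economical, while yours makes the finite dimensionality of $Q_\m$ over $\kappa$ explicit and then reuses it for the stable-rank step, so nothing is wasted.
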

\begin{proof}
First we show that $\phi_\m$ is surjective. Set $C_\m:=S_\m^{-1}C$. Clearly $R_\m$ is finitely generated as a $C_\m$-module. Hence, by \cite[Corollary 5.9]{lam_book}, we have $\m=\Rad(C_\m)\subseteq \Rad (R_\m)$. Let now $\frac{r}{s}\in R_\m$. Since $sC+\m=C$, there is a $c\in C$ and an $m\in\m$ such that $sc+m=1$. Multiplying this equality by $\frac{r}{s}$ we get $\frac{r}{s}=rc+\frac{rm}{s}\in rc+\Rad(R_\m)$. Thus $\phi_{\m}$ is surjective.\\
Next we show that $\sr(Q_{\m})=1$. Clearly $Q_\m=R_\m/\Rad(R_{\m})$ is finitely generated as a $C_\m/\Rad(C_{\m})$-module. But $C_\m/\Rad(C_{\m})$ is a field. Hence $Q_\m$ is a semisimple ring (see \cite[\S 7]{lam_book}) and therefore $\sr(Q_{\m})=1$ (see \cite[\S6]{bass}).
\end{proof}

\begin{lemma}\label{lemalmcom4}
Suppose that $R$ is almost commutative. Let $u=(u_1,\dots,u_n)\in {}^n\!R$ be a unimodular row. Then there are matrices $\tau_{\m}\in \E^{**}_n(R)~(\m\in \Max(C))$ such that $C=\sum\limits_{\m\in \Max(C)}\Oo((u\tau_{\m})_1)$.
\end{lemma}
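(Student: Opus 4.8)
The plan is to turn the equality $C=\sum_{\m\in\Max(C)}\Oo((u\tau_\m)_1)$ into a statement about a single maximal ideal. Since the right-hand side is an ideal of the commutative ring $C$, it equals $C$ iff it lies in no maximal ideal, so it suffices to construct, for each fixed $\m\in\Max(C)$, a matrix $\tau_\m\in\E^{**}_n(R)$ with $\Oo((u\tau_\m)_1)\not\subseteq\m$ (for the remaining indices one may take $\tau_{\m'}=e$). I first record that right multiplication by a generator $t_{k1}(x)$ ($k\neq1$) alters only the first entry of a row, replacing $u_1$ by $u_1+u_kx$; thus for $\tau=\prod_{k=2}^{n}t_{k1}(x_k)\in\E^{**}_n(R)$ one has $(u\tau)_1=u_1+\sum_{k=2}^{n}u_kx_k$, and this single entry is all that enters the ideal $\Oo$.

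Fix $\m$. By Lemma \ref{lemalmcom3} the map $\phi_\m\colon R\to Q_\m$ is surjective and $\sr(Q_\m)=1$. As $\phi_\m$ is a ring homomorphism, $\phi_\m(u)$ is unimodular in ${}^n\!Q_\m$; since $\sr(Q_\m)=1\le n-1$, repeatedly shortening this row produces $\bar x_2,\dots,\bar x_n\in Q_\m$ with $\phi_\m(u_1)+\sum_{k=2}^{n}\phi_\m(u_k)\bar x_k$ a unit of $Q_\m$ (a right invertible element of a ring of stable rank $1$ is a unit). Lifting the $\bar x_k$ through the surjection $\phi_\m$ to elements $x_k\in R$, I set $\tau_\m:=\prod_{k=2}^{n}t_{k1}(x_k)\in\E^{**}_n(R)$ and $w:=(u\tau_\m)_1=u_1+\sum_{k=2}^{n}u_kx_k$, so that $\phi_\m(w)$ is a unit of $Q_\m$. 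It remains to prove $\Oo(w)\not\subseteq\m$.

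Write $\kappa:=C/\m$ and $\bar R:=R/\m R$, a finite-dimensional $\kappa$-algebra since $R$ is $C$-finite. From $\m R_\m\subseteq\Rad(R_\m)$ (established in the proof of Lemma \ref{lemalmcom3}) one sees that $Q_\m$ is the quotient of $\bar R\cong R_\m/\m R_\m$ by its Jacobson radical, which is nilpotent; hence $\phi_\m(w)$ being a unit forces the image $\bar w$ of $w$ in $\bar R$ to be a unit, so left multiplication $\lambda_{\bar w}$ on the $\kappa$-space $\bar R$ is bijective and $\det_\kappa\lambda_{\bar w}\neq0$. Now I work over the local ring $C_\m$ with residue field $\kappa$. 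By Nakayama I may choose a \emph{minimal} generating set $g_1,\dots,g_m$ of the $C_\m$-module $R_\m$, i.e.\ one whose residues form a $\kappa$-basis of $\bar R$, and write $wg_i=\sum_j a_{ij}g_j$ with $A=(a_{ij})\in\Mat_m(C_\m)$. Working in the commutative subring $C_\m[w]$, the Cayley--Hamilton/determinant trick gives $\det(wI-A)=0$, i.e.\ a monic relation $w^m+c_1w^{m-1}+\dots+c_m=0$ with $c_i\in C_\m$ and $c_m=\pm\det A$. Its constant term factors as $c_m=-w\bigl(w^{m-1}+c_1w^{m-2}+\dots+c_{m-1}\bigr)=-\bigl(w^{m-1}+\dots+c_{m-1}\bigr)w$, a central multiple of $w$ (the second factor commutes with $w$, being a polynomial in $w$ with central coefficients). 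Because the residues of the $g_i$ form a basis, the reduction of $A$ modulo $\m C_\m$ is exactly the matrix of $\lambda_{\bar w}$, so $\overline{\det A}=\det_\kappa\lambda_{\bar w}\neq0$ and $c_m$ is a unit of $C_\m$.

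Finally I descend to $C$. Writing $c_m=a/s$ with $a\in C$, $s\in S_\m=C\setminus\m$, and clearing denominators in the equalities $c_m=wy=yw$ (with $y$ the central-coefficient polynomial in $w$ above), I obtain $z\in C$ and $r\in R$ with $z=wr=rw$ and $z$ equal, up to a factor in $S_\m$, to $a$. Since $c_m$ is a unit of $C_\m$ we have $a\notin\m$, and as $\m$ is prime the whole product defining $z$ lies outside $\m$; thus $z\in\Oo(w)\setminus\m$ and $\Oo(w)\not\subseteq\m$, completing the reduction. I expect the main obstacle to be exactly the third paragraph: converting ``$w$ is a unit modulo the radical'' into a genuine central multiple of $w$ outside $\m$. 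One must produce an \emph{exact} monic relation over $C_\m$ (hence the localization and Cayley--Hamilton), and to guarantee that its constant term is a unit one must use a minimal generating set so that the reduced matrix is the honest matrix of $\lambda_{\bar w}$ with nonzero determinant; with redundant generators the determinant could vanish and the argument would break down.
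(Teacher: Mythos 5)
Your proof is correct, and its first half is essentially the paper's own argument: you reduce the lemma to producing, for each $\m\in\Max(C)$, a matrix $\tau_\m\in \E^{**}_n(R)$ with $\Oo((u\tau_\m)_1)\not\subseteq\m$, and you construct $\tau_\m$ exactly as in the paper from Lemma \ref{lemalmcom3} (surjectivity of $\phi_\m$ and $\sr(Q_\m)=1$); your explicit repeated-shortening argument, together with the remark that right invertible elements of a stable rank $1$ ring are units, replaces the paper's citations of Vaserstein and Lam, which is fine. Where you genuinely diverge is in the last step, converting ``$\phi_\m(w)$ is a unit of $Q_\m$'' (with $w=(u\tau_\m)_1$) into a central multiple of $w$ outside $\m$. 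The paper does this in two lines: since $Q_\m=R_\m/\Rad(R_\m)$, the element $w/1$ is already invertible in $R_\m$ (units lift modulo the Jacobson radical); writing $(w/1)^{-1}=\frac{x}{s}$ and clearing denominators in the two equations $\frac{w}{1}\frac{x}{s}=\frac{1}{1}=\frac{x}{s}\frac{w}{1}$ produces $t,t'\in S_\m$ with $z_\m:=tt's=w(tt'x)=(tt'x)w\in\Oo(w)\cap S_\m$, and centrality of $S_\m$ is all that is needed. Your route instead passes to $\bar R=R/\m R$, lifts the unit there through the nilpotent radical, and then runs the Cayley--Hamilton/determinant trick over $C_\m$ with a minimal generating set, so that the constant term $c_m$ is simultaneously a unit of $C_\m$ and a central multiple of $w$, and finally descends to $C$. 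All the supporting claims check out (the identification $\bar R\cong R_\m/\m R_\m$, the nilpotence of $\Rad(\bar R)$, the fact that minimality forces $\overline{\det A}=\det_\kappa\lambda_{\bar w}\neq 0$, and the denominator-clearing at the end), so this is a valid alternative; it is heavier than necessary, since it re-uses finite generation of $R$ over $C$ via an integrality argument at a point where simply inverting $w$ in $R_\m$ suffices, but it buys a canonical formula for the central multiple (a characteristic-polynomial constant term) and never needs $w$ itself to be invertible in $R_\m$, only modulo $\m R_\m$. One small correction: delete the parenthetical ``for the remaining indices one may take $\tau_{\m'}=e$'' --- the family you actually need is the one in which \emph{every} $\tau_\m$ is constructed to escape its own $\m$; setting the others equal to $e$ would not yield $C=\sum_{\m}\Oo((u\tau_\m)_1)$.
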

\begin{proof}
Let $\m\in\Max(C)$ and set $\hat u:=\phi_\m(u)\in {}^n\!(Q_m)$. Since $\sr(Q_{\m})=1$ and $\hat u$ is unimodular, there is a $\hat\tau_{\m}\in \E_n(Q_{\m})$ such that $(\hat u\hat\tau_{\m})_1$ is invertible (note that rings of stable rank $1$ are Dedekind finite, see e.g. \cite[Lemma 1.7]{lam}). Clearly $\hat\tau_{\m}$ can be chosen to be in $E_n^{**}(Q_\m)$ (follows from \cite[Theorem 1]{vaserstein_sr}). Since $\phi_{\m}$ is surjective, it induces a surjective homomorphism $\E^{**}_n(R)\rightarrow\E^{**}_n(Q_{\m})$ which we also denote by $\phi_\m$. Choose a $\tau_\m\in E^{**}_n(R)$ such that $\phi_\m(\tau_\m)=\hat\tau_\m$.\\
Since $\phi_{\m}((u\tau_{\m})_1)=(\hat u\hat\tau_{\m})_1$ is invertible in $Q_\m$, $(u\tau_{\m})_1$ is invertible in $R_{\m}$. Write $((u\tau_{\m})_1)^{-1}=\frac{x}{s}$ where $x\in R$ and $s\in S_{\m}$. Then
\begin{align*}
\frac{(u\tau_{\m})_1}{1}\frac{x}{s}=\frac{1}{1}~\Leftrightarrow~\exists t\in S_{\m}:t(u\tau_{\m})_1x=ts
\end{align*}
and
\begin{align*}
\frac{x}{s}\frac{(u\tau_{\m})_1}{1}=\frac{1}{1}~\Leftrightarrow~\exists t'\in S_{\m}:t'x(u\tau_{\m})_1=t's.
\end{align*}
Clearly $z_{\m}:=(u\tau_{\m})_1tt'x=tt's=tt'x(u\tau_{\m})_1\in \Oo((u\tau_{\m})_1)\cap S_{\m}$. We have shown that for any $\m\in\Max(C)$ there is a $\tau_{\m}\in \E^{**}_n(R)$ and a $z_{\m}\in \Oo((u\tau_{\m})_1)$ such that $z_{\m}\not\in \m$. The assertion of the lemma follows.
\end{proof}

\begin{corollary}\label{coralmcom}
Suppose that $R$ is almost commutative and let $\sigma\in \GL_n(R)$. Then there is a $q\leq |\Max(C)|$, $\tau_p\in E^{**}_n(R)~(1\leq p\leq q)$ and $z_p\in \Oo((\sigma^{\tau_{_p}})_{11})~(1\leq p\leq q)$ such that $\sum\limits_{p=1}^{q}z_p=1$.
\end{corollary}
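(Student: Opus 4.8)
{Proof proposal}

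The plan is to apply the row $\sigma_{1*}=(\sigma_{11},\dots,\sigma_{1n})$ of $\sigma$ (which is unimodular, since $\sigma\in\GL_n(R)$) to Lemma~\ref{lemalmcom4}, and then to translate the resulting decomposition of $C$ into the desired family of central idempotent-like elements. First I would observe that the first row $\sigma_{1*}$ is a unimodular row in ${}^n\!R$: indeed, $\sum_p \sigma_{1p}\sigma'_{p1}=(\sigma\sigma^{-1})_{11}=1$, so $v=\sigma'_{*1}$ witnesses unimodularity. Since $R$ is almost commutative, Lemma~\ref{lemalmcom4} applies to $u:=\sigma_{1*}$ and furnishes matrices $\tau_{\m}\in \E^{**}_n(R)$ indexed by $\m\in\Max(C)$ with $C=\sum_{\m}\Oo((u\tau_{\m})_1)$.

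The next step is to unwind the notation so that the abstract conclusion of Lemma~\ref{lemalmcom4} matches the statement to be proved. Because each $\tau_{\m}$ lies in $\E^{**}_n(R)=\langle t_{k1}(x)\mid k\neq 1\rangle$, right multiplication of $\sigma$ by $\tau_{\m}$ only alters the first column, so the first row of $\sigma\tau_{\m}$ agrees with the first row of $\sigma^{\tau_{\m}}$; in particular $(u\tau_{\m})_1=(\sigma\tau_{\m})_{11}=(\sigma^{\tau_{\m}})_{11}$. Hence $\Oo((u\tau_{\m})_1)=\Oo((\sigma^{\tau_{\m}})_{11})$, and the decomposition $C=\sum_{\m}\Oo((\sigma^{\tau_{\m}})_{11})$ holds. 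Since $1\in C$, I can write $1=\sum_{\m}z_{\m}$ with $z_{\m}\in\Oo((\sigma^{\tau_{\m}})_{11})$ and all but finitely many $z_{\m}$ equal to zero (a sum in an abelian group is by definition finitely supported). Discarding the vanishing terms and re-indexing the surviving ones as $p=1,\dots,q$ with $q\leq|\Max(C)|$ yields exactly the asserted $\tau_p\in\E^{**}_n(R)$ and $z_p\in\Oo((\sigma^{\tau_p})_{11})$ with $\sum_{p=1}^{q}z_p=1$.

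There is essentially no hard part here: the corollary is a direct specialisation of Lemma~\ref{lemalmcom4} to the unimodular first row of an invertible matrix, combined with the bookkeeping that the entry $(u\tau_{\m})_1$ coincides with the $(1,1)$-entry of the conjugate $\sigma^{\tau_{\m}}$. The only point requiring a word of care is the identification $(\sigma\tau_{\m})_{11}=(\sigma^{\tau_{\m}})_{11}$, which rests on $\tau_{\m}\in\E^{**}_n(R)$ acting trivially on the first row under conjugation (the same observation already exploited in Lemma~\ref{lemalmcom2}); once this is in place, the finiteness of the index set $q$ follows from finite support of the sum expressing $1$, giving the bound $q\leq|\Max(C)|$.
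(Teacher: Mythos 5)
Your proof is correct and essentially identical to the paper's: both apply Lemma~\ref{lemalmcom4} to the unimodular first row $u=\sigma_{1*}$, identify $(u\tau_{\m})_1=(\sigma\tau_{\m})_{11}=(\sigma^{\tau_{\m}})_{11}$ using $\tau_{\m}\in\E^{**}_n(R)$, and then write $1\in C$ as a finitely supported sum of elements $z_{\m}\in\Oo((\sigma^{\tau_{\m}})_{11})$. One minor wording fix: the agreement of the first rows of $\sigma\tau_{\m}$ and $\sigma^{\tau_{\m}}$ follows not from right multiplication altering only the first column, but from the fact that every element of $\E^{**}_n(R)$, in particular $\tau_{\m}^{-1}$, has first row $e_{1*}$, so that left multiplication by $\tau_{\m}^{-1}$ fixes first rows.
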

\begin{proof}
By Lemma \ref{lemalmcom4} there are matrices $\tau_{\m}\in E^{**}_n(R)~(\m\in \Max(C))$ such that 
\[C=\sum\limits_{\m\in \Max(C)}\Oo((\sigma_{1*}\tau_{\m})_1)=\sum\limits_{\m\in \Max(C)}\Oo((\sigma\tau_{\m})_{11})=\sum\limits_{\m\in \Max(C)}\Oo((\sigma^{\tau_{\m}})_{11})\]
(for the last equation we have used that $\tau_{\m}\in E^{**}_n(R)$). The assertion of the corollary follows.
\end{proof}

\begin{theorem}\label{thmalmcom}
Suppose that $R$ is almost commutative. Let $\sigma\in \GL_n(R)$, $i\neq j$, $k\neq l$ and $a,b,c\in R$. Then 
\begin{enumerate}[(i)]
\itemsep0pt 
\item $t_{kl}(a\sigma_{ij}b)$ is a finite product of $8|\Max(C)|$ or less $\E_n(R)$-conjugates of $\sigma$ and $\sigma^{-1}$ and
\item $t_{kl}(a(c\sigma_{ii}-\sigma_{jj}c)b)$ is a finite product of $24|\Max(C)|$ or less $\E_n(R)$-conjugates of $\sigma$ and $\sigma^{-1}$.
\end{enumerate}
\end{theorem}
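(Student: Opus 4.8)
The plan is to treat (i) as an essentially immediate consequence of the machinery already assembled in Corollary \ref{coralmcom} and Lemma \ref{lemalmcom2}, and then to deduce (ii) from (i) by exactly the algebraic manipulation used in the proof of Theorem \ref{thmNeum}(ii), keeping track only of the extra factor $|\Max(C)|$.

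For (i) I would first reduce to the entry in position $(1,2)$. Since $R$ is almost commutative, Corollary \ref{coralmcom} supplies an integer $q\leq|\Max(C)|$, matrices $\tau_1,\dots,\tau_q\in\E^{**}_n(R)$ and central elements $z_p\in\Oo((\sigma^{\tau_p})_{11})$ with $\sum_{p=1}^q z_p=1$. These are precisely the hypotheses of Lemma \ref{lemalmcom2}, which then yields that $t_{kl}(a\sigma_{12}b)$ is a product of $8q\leq 8|\Max(C)|$ $\E_n(R)$-conjugates of $\sigma$ and $\sigma^{-1}$. To pass from position $(1,2)$ to an arbitrary $(i,j)$ with $i\neq j$, I would invoke Lemma \ref{lemp} to obtain a $\tau\in\Perm_n(R)$ with $(\sigma^{\tau})_{12}=\sigma_{ij}$, apply the previous step to $\sigma^{\tau}$, and observe that every $\E_n(R)$-conjugate of $\sigma^{\tau}$ (resp. $(\sigma^{\tau})^{-1}$) is an $\E_n(R)$-conjugate of $\sigma$ (resp. $\sigma^{-1}$), since $\tau\in\Perm_n(R)\subseteq\E_n(R)$. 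This gives (i) with the stated bound.

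For (ii) I would reuse the identity from Theorem \ref{thmNeum}(ii). The $(j,i)$-entry of $\sigma^{t_{ji}(-c)}$ equals $c\sigma_{ii}-\sigma_{jj}c+\sigma_{ji}-c\sigma_{ij}c$, so applying (i) to $\sigma^{t_{ji}(-c)}$ — an element of $\GL_n(R)$ over the same ring, hence with the same centre $C$ and the same bound — shows that $t_{kl}(a(c\sigma_{ii}-\sigma_{jj}c+\sigma_{ji}-c\sigma_{ij}c)b)$ is a product of at most $8|\Max(C)|$ conjugates of $\sigma$ and $\sigma^{-1}$; here I use that $t_{ji}(-c)\in\E_n(R)$, so conjugates of $\sigma^{t_{ji}(-c)}$ are conjugates of $\sigma$. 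Applying (i) twice to $\sigma$ expresses $t_{kl}(a(c\sigma_{ij}c-\sigma_{ji})b)=t_{kl}(ac\sigma_{ij}cb)\,t_{kl}(-a\sigma_{ji}b)$ as a product of at most $16|\Max(C)|$ conjugates. Since $(c\sigma_{ii}-\sigma_{jj}c)=(c\sigma_{ii}-\sigma_{jj}c+\sigma_{ji}-c\sigma_{ij}c)+(c\sigma_{ij}c-\sigma_{ji})$, relation (R1) lets me write $t_{kl}(a(c\sigma_{ii}-\sigma_{jj}c)b)$ as the product of the two transvections above, a product of at most $24|\Max(C)|$ conjugates of $\sigma$ and $\sigma^{-1}$.

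The point worth emphasising is that essentially all of the difficulty has been discharged upstream: the genuine obstacle is Corollary \ref{coralmcom}, which rests on the localisation argument of Lemmas \ref{lemalmcom3} and \ref{lemalmcom4} (surjectivity of $\phi_{\m}$, semisimplicity and stable rank $1$ of $Q_{\m}$, and assembling $1$ as a finite sum of central multiples). Granting that, the theorem itself requires only bookkeeping. The two small things I would be careful about are that the number $q$ produced by Corollary \ref{coralmcom} is finite even when $\Max(C)$ is infinite, so that the conclusion is always a genuine finite product, and that the bound $8|\Max(C)|$ is insensitive to replacing $\sigma$ by a conjugate $\sigma^{\xi}$ with $\xi\in\E_n(R)$, since $C$ depends only on $R$.
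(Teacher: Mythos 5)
Your proposal is correct and follows exactly the paper's own route: part (i) is Corollary \ref{coralmcom} fed into Lemma \ref{lemalmcom2} followed by Lemma \ref{lemp}, and part (ii) is the conjugation-by-$t_{ji}(-c)$ identity from Theorem \ref{thmNeum}(ii), which the paper invokes by reference and you merely spell out. Your added remarks (finiteness of $q$, invariance of the bound under elementary conjugation) are sound but not needed beyond what the cited lemmas already give.
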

\begin{proof}
\begin{enumerate}[(i)]
\item It follows from Lemma \ref{lemalmcom2} and Corollary \ref{coralmcom} that that $t_{kl}(a\sigma_{12}b)$ is a product of $8q$ elementary $\sigma$-conjugates for some $q\leq|\Max(C)|$. Assertion (i) now follows from Lemma \ref{lemp}.
\item See the proof of Theorem \ref{thmNeum}.
\end{enumerate}
\end{proof}

\section{Open problems}
\begin{definition}
Let $m\in\N$. Then $m$ is called an {\it RDU bound for $\GL_n(R)$} if for any $\sigma\in\GL_n(R)$, $k\neq l$ and $i\neq j$ the matrix $t_{kl}(\sigma_{ij})$ is a product of $m$ or less $\E_n(R)$-conjugates of $\sigma$ and $\sigma^{-1}$. If $m$ is an RDU bound for $\GL_n(R)$ for any $n\geq 3$, then $m$ is called an {\it RDU bound for $R$}. If $\Cl$ is a class of rings and $m$ is an RDU bound for $R$ for any $R\in \Cl$, then $m$ is called an {\it RDU bound for $\Cl$}.
\end{definition}

It follows from Sections 5, 6 and 8 that $8$ is an RDU bound for the class of commutative rings, the class of von Neumann regular rings, and the class of rings of stable rank $1$. It follows from Sections 7 and 9 that $160$ is an RDU bound for the class of Banach algebras and $16$ is an RDU bound for the class of quasi-Euclidean rings. Note that Section 10 does not yield an RDU bound for the class of almost commutative rings since the maximal spectrum of an almost commutative ring might be infinite.

It is natural to ask the following two question.

\begin{question}\label{Q1}
Which rings have an RDU bound? 
\end{question}

\begin{question}\label{Q2}
What is the optimal (i.e. smallest) RDU bound for the class of commutative rings (resp. von Neumann regular rings, rings of stable rank $1$ etc.)? 
\end{question}

Using a computer program the author found out that $2$ is the optimal RDU bound for $\GL_3(\F_2)$ and $\GL_3(\F_3)$ where $\F_2$ and $\F_3$ denote the fields of $2$ and $3$ elements, respectively. He does not know the optimal bounds in any other cases.

\begin{lemma}
Suppose that $1$ is an RDU bound for a ring $R$. Then $\GL_n(R)=\E_n(R)$ for any $n\geq 3$.
\end{lemma}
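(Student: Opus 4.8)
The plan is to show that if every elementary transvection $t_{kl}(\sigma_{ij})$ (for $\sigma\in\GL_n(R)$, $k\neq l$, $i\neq j$) can be realised as a \emph{single} $\E_n(R)$-conjugate of $\sigma$ or $\sigma^{-1}$, then every $\sigma\in\GL_n(R)$ already lies in $\E_n(R)$, whence $\GL_n(R)=\E_n(R)$. The natural route is to take an arbitrary $\sigma\in\GL_n(R)$ and use column/row reduction, driven by the RDU bound, to reduce $\sigma$ modulo $\E_n(R)$ to a matrix with fewer off-diagonal entries, until nothing is left.

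First I would fix $\sigma\in\GL_n(R)$ and observe that it suffices to show $\sigma\in\E_n(R)$. The key leverage is that an $\E_n(R)$-conjugate of $\sigma$ or $\sigma^{-1}$ lies in $\E_n(R)$ \emph{if and only if} $\sigma$ itself does, since $\E_n(R)$ is a subgroup normalised by $\E_n(R)$; so if some $t_{kl}(\sigma_{ij})$ equals such a conjugate, then as long as $\sigma_{ij}\neq 0$ we can extract information about $\sigma$. More usefully, the RDU bound $1$ says each $t_{kl}(\sigma_{ij})$ lies in $\sigma^{\E_n(R)}$, and in fact is a single conjugate $\sigma^{\xi}$ or $(\sigma^{-1})^{\xi}$. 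I would then argue that $\sigma^{\E_n(R)}\subseteq\E_n(R)$ would force $t_{kl}(\sigma_{ij})\in\E_n(R)$ trivially (which is automatic), so the content must be extracted the other way: since $t_{kl}(\sigma_{ij})$ is a \emph{single} conjugate of $\sigma$ or $\sigma^{-1}$, and $t_{kl}(\sigma_{ij})$ is elementary, conjugating back shows $\sigma$ (or $\sigma^{-1}$) is $\E_n(R)$-conjugate to an elementary transvection, hence $\sigma\in\E_n(R)$ \emph{provided some off-diagonal entry is nonzero}.

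The remaining case is when $\sigma$ is diagonal (or more generally has a structure not detected by off-diagonal entries). The main obstacle is precisely this degenerate case: the statement $t_{kl}(\sigma_{ij})\in\sigma^{\E_n(R)}$ gives useful information only when the $\sigma_{ij}$ we can access are nonzero and when the ``single conjugate'' constraint is genuinely restrictive. To handle a diagonal or triangular $\sigma$, I would first multiply $\sigma$ by suitable elementary transvections (which does not change the coset $\sigma\E_n(R)$) to create a nonzero off-diagonal entry, or apply Lemma~\ref{lemp} to move entries into convenient positions, and then apply the single-conjugate hypothesis to the modified matrix. The delicate point is ensuring that after such modifications one still has an element whose relevant entry is controlled, and that the chain of reductions terminates with $\sigma\in\E_n(R)$ rather than merely $\sigma\in\sigma^{\E_n(R)}$.

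I expect the cleanest formulation to run as follows: take $\sigma\in\GL_n(R)$; by the RDU bound applied with a well-chosen $(i,j)$ and $(k,l)$, write $t_{kl}(\sigma_{ij})=\sigma^{\xi}$ or $=(\sigma^{-1})^{\xi}$ for some $\xi\in\E_n(R)$; rearranging gives $\sigma=(t_{kl}(\sigma_{ij}))^{\xi^{-1}}\in\E_n(R)$ or $\sigma^{-1}\in\E_n(R)$, and in either case $\sigma\in\E_n(R)$, using that $\E_n(R)$ is closed under conjugation by $\E_n(R)$ and under inversion. Thus the hard part is really just the bookkeeping that guarantees the single conjugate is of $\sigma$ or $\sigma^{-1}$ (not a longer product) and that one can always arrange a position where this identity is informative; once that is in place the conclusion $\GL_n(R)=\E_n(R)$ is immediate for every $n\geq 3$.
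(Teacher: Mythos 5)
Your proposal is correct and takes essentially the same route as the paper: the paper's proof is exactly your final paragraph, namely by the definition of an RDU bound one writes $t_{12}(\sigma_{12})=\sigma^{\tau}$ or $t_{12}(\sigma_{12})=(\sigma^{-1})^{\tau}$ for some $\tau\in\E_n(R)$, rearranges to $\sigma=t_{12}(\sigma_{12})^{\tau^{-1}}$ or $\sigma^{-1}=t_{12}(\sigma_{12})^{\tau^{-1}}$, and concludes $\sigma\in\E_n(R)$ since $\E_n(R)$ is a subgroup stable under conjugation by its own elements and under inversion. The degenerate case you dwell on in your middle paragraphs (entries $\sigma_{ij}=0$ making the identity uninformative) is not treated in the paper, which reads the definition as supplying a single-conjugate expression for every $t_{kl}(\sigma_{ij})$; your proposed fix is harmless extra care but not part of the paper's argument.
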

\begin{proof}
If $n\geq 3$ and $\sigma\in \GL_n(R)$, then, by the definition of an RDU bound, $\sigma=t_{12}(\sigma_{12})^{\tau}$ or $\sigma^{-1}=t_{12}(\sigma_{12})^{\tau}$ for some $\tau\in \E_n(R)$. Hence $\sigma\in \E_n(R)$.
\end{proof}

Since $\E_n(R)\subseteq\SL_n(R)\neq \GL_n(R)$ if $R$ is a field with more than two elements, we get the following result. 

\begin{theorem}
The optimal RDU bound for the class of commutative rings (resp. von Neumann regular rings, rings of stable rank $1$) lies between $2$ and $8$.
\end{theorem}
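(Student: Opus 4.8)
The plan is to prove the two inequalities separately; both follow essentially immediately from results already established, so the argument is short.

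\emph{Upper bound.} To see that the optimal bound is at most $8$, I would appeal to Theorems \ref{thmcomm}, \ref{thmNeum} and \ref{thmsr=1}. Each asserts that, for $\sigma\in\GL_n(R)$ in the relevant class, the transvection $t_{kl}(a\sigma_{ij}b)$ (respectively $t_{kl}(a\sigma_{ij})$ in the commutative case) is a product of $8$ $\E_n(R)$-conjugates of $\sigma$ and $\sigma^{-1}$. Specialising to $a=b=1$ gives that $t_{kl}(\sigma_{ij})$ itself is such a product, so $8$ is an RDU bound for each of the three classes. Hence the optimal (smallest) RDU bound for each class does not exceed $8$.

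\emph{Lower bound.} The key point is to exhibit a single ring, lying in all three classes simultaneously, for which $1$ fails to be an RDU bound. I would take $R$ to be any field with more than two elements, say $R=\F_3$. Such an $R$ is commutative; it is von Neumann regular, since every nonzero $x$ satisfies $xx^{-1}x=x$; and it has stable rank $1$. Thus it belongs to all three classes at once. Now the preceding lemma shows that if $1$ were an RDU bound for $R$, then $\GL_n(R)=\E_n(R)$ for every $n\geq 3$. But over a field $\E_n(R)=\SL_n(R)$, and because $R$ possesses a unit distinct from $1$ the determinant map forces $\SL_n(R)\neq\GL_n(R)$. This contradiction shows that $1$ is not an RDU bound for $R$, hence not an RDU bound for any of the three classes. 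Since an RDU bound is by definition a positive integer, the optimal bound for each class is therefore at least $2$.

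Combining the two bounds yields the assertion. The only real observation required is that a single field with more than two elements is at once commutative, von Neumann regular and of stable rank $1$, so that one example rules out the value $1$ for all three classes simultaneously; beyond this there is no genuine obstacle.
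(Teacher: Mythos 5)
Your proposal is correct and follows essentially the same route as the paper: the upper bound comes from Theorems \ref{thmcomm}, \ref{thmNeum} and \ref{thmsr=1}, and the lower bound from the preceding lemma together with the observation that for a field $R$ with more than two elements (which lies in all three classes) one has $\E_n(R)\subseteq\SL_n(R)\neq\GL_n(R)$, so $1$ cannot be an RDU bound. The only cosmetic difference is that you assert the equality $\E_n(R)=\SL_n(R)$ over a field, whereas the paper only needs (and states) the inclusion $\E_n(R)\subseteq\SL_n(R)$.
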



The author thinks that the following question is interesting too (compare Lemma \ref{lemeucl}).

\begin{question}\label{Q3}
Let $\sigma\in \GL_n(R)$ and suppose $(\sigma\tau)_{1n}=0$ for some $\tau\in E_n(R)$. Can one extract some nondiagonal entry of $\sigma$ or $\sigma^{-1}$? (I.e., is it true that $t_{kl}(\sigma_{ij})\in \sigma^{\E_n(R)}$ or $t_{kl}(\sigma'_{ij})\in \sigma^{\E_n(R)}$ for some $k\neq l$ and $i\neq j$?)
\end{question}

A positive answer to Question \ref{Q3} would have the interesting consequence that for any $\sigma$ lying in the elementary subgroup $\E_n(R)$, one could extract some nondiagonal entry of $\sigma$ or $\sigma^{-1}$ (since $(\sigma\sigma^{-1})_{1n}=0$).

\end{document}